\let\raggedright\justifying
\newtheorem{theorem}{Theorem}[section]
\newtheorem{lemma}[theorem]{Lemma}
\newtheorem{corollary}[theorem]{Corollary}
\newtheorem{proposition}[theorem]{Proposition}
\theoremstyle{remark}
\newtheorem{remark}[theorem]{Remark}
\newtheorem{example}[theorem]{Example}
\newcommand{\N}{\mathbb N}
\newcommand{\Z}{\mathbb Z}
\newcommand{\R}{\mathbb R}
\newcommand{\Q}{\mathbb Q}
\newcommand{\F}{\mathbb F}
\DeclareMathOperator{\ord}{ord}
\newcommand{\be}{\begin{equation}}
\newcommand{\ee}{\end{equation}}
\newcommand{\ber}{\begin{eqnarray}}
\newcommand{\eer}{\end{eqnarray}}
\DeclareSymbolFont{goo}{OMS}{cmsy}{b}{n}
\DeclareMathSymbol{\gooT}{\mathalpha}{goo}{"1}
\begin{document}

\title[divisibility on point counting over finite Witt rings]{
divisibility on point counting over finite Witt rings}

\author[W. Cao]{Wei Cao}
\address{School of Mathematics and Statistics\\ Minnan Normal University\\ Zhangzhou\\ Fujian 363000\\ P.R. China }
\email{caow2286@mnnu.edu.cn}

\author[D. Wan]{Daqing Wan}
\address{Department of Mathematics \\ University of California\\Irvine, 92697-3875\\ USA     }
\email{dwan@math.uci.edu}

\subjclass[2020]{11T06, 11D88, 13F35.}
\keywords{Witt vector, finite field, $p$-adic number field, Teichm\"uller box, $p$-adic estimate.}

\begin{abstract}
Let $\mathbb{F}_q$ denote the finite field of $q$ elements with characteristic $p$. Let $\mathbb{Z}_q$ denote
the unramified extension of the $p$-adic integers $\mathbb{Z}_p$ with residue field $\mathbb{F}_q$.
In this paper, we investigate the $q$-divisibility for the number of solutions of a polynomial system in $n$ variables over the finite Witt ring $\mathbb{Z}_q/p^m\mathbb{Z}_q$, where the $n$ variables of the polynomials are restricted to run through a combinatorial box lifting $\mathbb{F}_q^n$.
The introduction of the combinatorial box makes the problem much more complicated. We prove a $q$-divisibility theorem
for any box of low algebraic complexity, including the simplest Teichm\"uller box.  This extends the classical Ax-Katz theorem over finite field $\mathbb{F}_q$ (the case $m=1$).
Taking $q=p$ to be a prime, our result extends and improves a recent combinatorial theorem of Grynkiewicz. Our different approach is based on the addition operation of Witt vectors and is conceptually much more transparent.

\end{abstract}

\maketitle

\section{Introduction}\label{sect1}

Let $\N$ denote the set of positive integers. Let $p$ be a prime number and $q=p^h$ with $h\in \N$. Let $\mathbb{F}_q$ denote the finite field of $q$ elements and $\mathbb{F}_q[x_1,\ldots,x_n]$ the ring of the polynomials in $n$ variables $x_1,\ldots,x_n$ with coefficients in $\mathbb{F}_q$. The study of the common zeros of a system of polynomials in $\mathbb{F}_q[x_1,\ldots,x_n]$ is a classical and important subject in Number Theory and Arithmetic Geometry. In general it is hard to know the exact cardinality of the set of such common zeros in $\mathbb{F}_q$. However, the Chevalley-Warning and Ax-Katz theorems provide the estimates of $p$-divisibility for this problem by utilizing the degrees of the associated polynomials. Given a set $S$, let $|S|$ denote the cardinality of $S$. Write $X:=(x_1,\ldots,x_n)$ and set $[a,b]:=\{ x\in \Z \mid a\leq x\leq b\}$ for $a,b\in \R$.

\begin{theorem}{\rm (Chevalley-Warning)}\label{cwthm}
Let $f_1(X),\dots,f_s(X)\in \mathbb{F}_q[x_1,\ldots,x_n]$ be a system of nonzero polynomials, and let
\begin{equation*}
  V:=\{X\in \mathbb{F}_q^n | f_k(X)=0 \mbox{ for all } k\in [1,s]\}.
\end{equation*}
 If $n>\sum_{k=1}^s\deg (f_k)$, then $p$ divides $|V|$.
\end{theorem}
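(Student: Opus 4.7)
The plan is to reduce the divisibility statement $p \mid |V|$ to an elementary character-sum computation on $\mathbb{F}_q^n$. First, I would introduce the \emph{indicator polynomial}
\[
P(X) \;:=\; \prod_{k=1}^{s}\bigl(1 - f_k(X)^{q-1}\bigr) \;\in\; \mathbb{F}_q[x_1,\ldots,x_n].
\]
Since every nonzero $a \in \mathbb{F}_q$ satisfies $a^{q-1}=1$ while $0^{q-1}=0$, one checks that $P(X)=1$ for $X\in V$ and $P(X)=0$ otherwise. Consequently, viewing the integer $|V|$ modulo $p$ as an element of $\mathbb{F}_q$, one obtains
\[
|V| \;\equiv\; \sum_{X \in \mathbb{F}_q^n} P(X) \pmod{p},
\]
so it suffices to prove that the right-hand sum vanishes in $\mathbb{F}_q$.

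Next I would bound the total degree of $P$: each factor contributes at most $(q-1)\deg(f_k)$, hence
\[
\deg(P) \;\leq\; (q-1)\sum_{k=1}^{s}\deg(f_k) \;<\; (q-1)n,
\]
by the hypothesis $n > \sum_{k=1}^{s}\deg(f_k)$. Thus every monomial $X^\alpha = x_1^{\alpha_1}\cdots x_n^{\alpha_n}$ appearing in $P$ satisfies $|\alpha| := \alpha_1+\cdots+\alpha_n < (q-1)n$.

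The final ingredient is the elementary identity $\sum_{a \in \mathbb{F}_q} a^j = -1$ in $\mathbb{F}_q$ when $j$ is a positive multiple of $q-1$, and $\sum_{a \in \mathbb{F}_q} a^j = 0$ otherwise (with the convention $0^0 = 1$, so the $j=0$ case yields $q\cdot 1 = 0$). Combined with the Fubini-type factorisation
\[
\sum_{X \in \mathbb{F}_q^n} X^\alpha \;=\; \prod_{i=1}^{n}\sum_{a \in \mathbb{F}_q} a^{\alpha_i},
\]
this forces the monomial sum to vanish unless every exponent $\alpha_i$ is a positive multiple of $q-1$, which would require $|\alpha|\geq n(q-1)$, contradicting the degree bound. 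Hence $\sum_{X \in \mathbb{F}_q^n} P(X) = 0$ in $\mathbb{F}_q$, and $p \mid |V|$ follows.

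There is essentially no substantive obstacle: the cleverness is concentrated entirely in the choice of the indicator polynomial $P(X)$, after which the argument is pure bookkeeping. The only point requiring mild care is the edge case $\alpha_i=0$ in the character-sum identity, which is handled transparently by the convention $0^0 = 1$ together with $|\mathbb{F}_q|=q\equiv 0\pmod{p}$.
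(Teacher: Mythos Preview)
Your argument is correct and is precisely the classical proof due to Warning: construct the indicator polynomial $P(X)=\prod_{k=1}^{s}(1-f_k(X)^{q-1})$, bound its degree by $(q-1)\sum_k\deg(f_k)<(q-1)n$, and then use the vanishing of $\sum_{a\in\mathbb{F}_q}a^j$ for $j$ not a positive multiple of $q-1$ to conclude that $\sum_{X\in\mathbb{F}_q^n}P(X)=0$ in $\mathbb{F}_q$. Each step is handled cleanly, including the $\alpha_i=0$ edge case.

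As for comparison: the paper does not supply its own proof of this statement. Theorem~\ref{cwthm} is quoted as classical background, with references to Chevalley and Warning, and the paper's contribution begins with the Ax--Katz refinement and its $\mathbb{Z}_q$ generalizations. So there is nothing in the paper to compare against; your write-up simply fills in the standard argument that the authors chose to omit.
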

The Chevalley-Warning theorem also gave an affirmative answer to Artin's conjecture for the homogeneous polynomials (see \cite{Chevalley} and \cite{Warning}), and it was greatly improved by Ax \cite{Ax} for the case $s=1$ and Katz \cite{Katz} for general $s\geq 1$. Let $\mathrm{ord}_q $ denote the $q$-adic additive
valuation normalized by $\mathrm{ord}_qq=1$. If $q=p$, then $\mathrm{ord}_p $ is the $p$-adic additive
valuation normalized by $\mathrm{ord}_pp=1$. For $t\in\mathbb{R}$, let $\lceil t \rceil^*$ denote the least nonnegative
integer more than or equal to $t$. The Ax-Katz theorem can be stated as follows.

\begin{theorem}{\rm (Ax-Katz)}\label{axkatzthm}
With the same assumption as in Theorem \ref{cwthm}, we have
\begin{equation}\label{11}
  \mathrm{ord}_q(|V|)\geq \left\lceil\frac{n-\sum_{k=1}^s\deg (f_k)}{\max\nolimits_{k \in [1,s]}\deg (f_k)}\right\rceil^*.
\end{equation}
\end{theorem}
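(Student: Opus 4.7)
The plan is to derive the Ax-Katz estimate by converting $|V|$ into a $p$-adic exponential sum and extracting divisibility via a Dwork-style analysis. Fix a nontrivial additive character $\psi:\mathbb{F}_q\to\overline{\mathbb{Q}}_p^{\,*}$. Orthogonality of $\psi$ applied to each equation gives
\[
|V|\;=\;\frac{1}{q^{s}}\sum_{y\in\mathbb{F}_q^{s}}S(g_y),\qquad g_y:=\sum_{k=1}^{s}y_kf_k,\qquad S(g):=\sum_{x\in\mathbb{F}_q^{n}}\psi(g(x)).
\]
The term $y=0$ contributes exactly $q^{n-s}$, and every other term involves a polynomial $g_y$ of degree at most $d:=\max_k\deg f_k$. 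The problem thus reduces to obtaining $p$-adic lower bounds on $S(g_y)$ for $y\neq 0$.

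The next step is the standard one-polynomial bound: for any $g\in\mathbb{F}_q[x_1,\ldots,x_n]$ of positive degree,
\[
\mathrm{ord}_q\,S(g)\;\geq\;\lceil n/\deg g\rceil.
\]
I would derive this via Dwork's splitting: write $\psi(a)$ as a $p$-adically convergent power series in the Teichm\"uller lift $\tau(a)\in\mathbb{Z}_q$ of $a\in\mathbb{F}_q$, expand $\sum_x\psi(g(x))$ monomial-by-monomial in Teichm\"uller coordinates, and evaluate each monomial via
\[
\sum_{a\in\mathbb{F}_q}\tau(a)^{e}\;=\;\begin{cases}q,&e=0,\\ q-1,&e>0\text{ and }(q-1)\mid e,\\ 0,&\text{otherwise.}\end{cases}
\]
A surviving monomial $x_1^{e_1}\cdots x_n^{e_n}$ with $r$ nonzero exponents contributes a factor of $q^{n-r}$ up to units; the degree constraint $r(q-1)\leq\sum_ie_i\leq(q-1)\deg g$ forces $r\leq\deg g$, which combined with the $p$-adic weight of the coefficient in the Dwork expansion yields the bound. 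Substituting back into the character-sum decomposition produces the weaker Ax-Warning-style estimate
\[
\mathrm{ord}_q|V|\;\geq\;\lceil n/d\rceil-s,
\]
with $s\cdot d$ in place of $\sum_k\deg f_k$ in the numerator.

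The main obstacle is the final refinement---upgrading $s\cdot d$ to $\sum_k\deg f_k$. The term-by-term bound above is blind to the multi-parameter structure of the family $\{g_y\}$ and cannot see this improvement. To recover it one must aggregate the analysis: either apply Dwork's trace formula to the joint sum $\sum_{x,y}\psi(y\cdot f(x))$ and show that the slopes of the associated $L$-function are $\geq 1/d$ while the count of slopes below any threshold is controlled by $\sum_k\deg f_k$, or follow Katz's original route using Stickelberger's theorem on Gauss sums, tracking $p$-adic valuations of multinomial coefficients in the expansion of $\prod_k f_k^{a_k}$ over lattice points $(a_1,\ldots,a_s)\in[0,q-1]^{s}$. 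This combinatorial/$p$-adic accounting, in which the sum $\sum_k\deg f_k$ emerges only through a simultaneous analysis of all $s$ equations at once, is the technical heart of the Katz extension of Ax's theorem and is where I would expect the bulk of the work.
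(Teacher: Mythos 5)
The statement you are proving is one the paper does not prove: Theorem~\ref{axkatzthm} is quoted as a classical result of Ax and Katz (the paper points to \cite{Ax}, \cite{Katz}, and to \cite{Wan1,Wan2} for proofs), and everything in the paper downstream of it simply invokes it as a black box. So there is no internal proof to compare against; what can be assessed is whether your sketch would actually establish the stated bound.

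It would not, and you say so yourself. The character-sum decomposition, the identification of the $y=0$ term with $q^{n-s}$, and the reduction to a lower bound on $\mathrm{ord}_q S(g_y)$ for $y\neq 0$ are all fine. Granting the single-polynomial estimate $\mathrm{ord}_q S(g)\geq\lceil n/\deg g\rceil$, your argument delivers $\mathrm{ord}_q|V|\geq\lceil n/d\rceil - s$, which agrees with \eqref{11} only when $\deg f_1=\cdots=\deg f_s=d$ and is strictly weaker otherwise. The passage from $sd$ to $\sum_k\deg f_k$ in the numerator is the entire content of Katz's improvement over Ax, and in your write-up it appears only as a description of two possible strategies (slopes of the joint $L$-function, or Stickelberger-style accounting over lattice points $(a_1,\dots,a_s)\in[0,q-1]^s$), neither of which is carried out. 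This is not a finishing step that a careful reader could fill in routinely; it is the technical core of the theorem, and the term-by-term bound on $S(g_y)$ genuinely cannot see it, as you correctly observe.

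A secondary gap: even the single-polynomial bound $\mathrm{ord}_q S(g)\geq\lceil n/\deg g\rceil$ is only gestured at. Your degree-constraint argument ($r(q-1)\leq\sum_i e_i\leq(q-1)\deg g$ forcing $r\leq\deg g$) by itself yields only the Chevalley--Warning conclusion $\mathrm{ord}_q S(g)\geq n-\deg g+1$ in the relevant range; to reach $\lceil n/\deg g\rceil$ you must also track the $p$-adic valuations of the Dwork splitting coefficients attached to the surviving monomial, and the interplay between that valuation and the exponent $n-r$ on $q$ is exactly where the ceiling $\lceil n/d\rceil$ comes from. You mention this in passing but do not supply the estimate. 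In short, what you have is a correct outline of the \cite{Wan2}-style character-sum route together with an accurate diagnosis of where the hard work lies, rather than a proof.
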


An elementary proof of the Ax-Katz theorem is given in \cite{Wan1}. The simplest proof of the Ax-Katz theorem and its extension to character sums are given in \cite{Wan2}. A reduction of the Ax-Katz theorem for a system of equations to Ax's theorem for a single equation has been found by Hou \cite{Hou}. Besides these, there has been a lot of research work on this topic, including extensions, refinements, variants and alternative proofs (see, for example, \cite{AS,AM,clark2,brink,cao1,cao2,cao3,cao4,castro,chencao,Clark1,clark3,hb,Moreno,moreno2,Wilson}).

Recently, motivated by combinatorial applications, Grynkiewicz \cite{Grynkiewicz} proved a version of the Chevalley-Warning and Ax-Katz theorems over the residue class ring $\Z_p/p^m\Z_p$, in which the varying prime power moduli are allowed.
\begin{theorem}\textup{(\cite[Theorem 1.3]{Grynkiewicz})}\label{grythm}
Let $p$ be a prime number and $\mathcal B=\mathcal I_1\times \cdots\times \mathcal I_n$ with each $\mathcal I_j\subseteq \Z_p$  a complete system of residues modulo $p$ for $j\in[1,n]$. Let $m_1,\ldots,m_s\in \N$ and $f_1,\ldots, f_s\in \mathbb \Z_p[x_1,\ldots,x_n]$ be a system of nonzero polynomials, and let
\begin{align*}V:=\{X\in \mathcal B:\; f_k(X)\equiv 0\pmod {p^{m_k}}\mbox{ for all } k\in [1,s]\}.\end{align*}
Then
\begin{equation*}
\mathrm{ord}_p(|V|)\geq \left\lceil\frac{n-\sum_{k=1}^s\frac{p^{m_k}-1}{p-1}\deg (f_k)}{\max\nolimits_{k \in [1,s]}\{p^{m_k-1}\deg (f_k)\}}\right\rceil^*.
\end{equation*}
\end{theorem}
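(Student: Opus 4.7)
My strategy is to reduce Theorem~\ref{grythm} to the classical Ax--Katz Theorem~\ref{axkatzthm} in the case $q=p$ by translating the mixed-modulus congruences into a single polynomial system over $\F_p$ via Witt coordinates. Setting $m=\max_k m_k$, I identify $\Z_p/p^m\Z_p$ with the truncated Witt ring $W_m(\F_p)$, so that each $x_j$ is recorded by its Witt tuple $(y_{j,0},\ldots,y_{j,m-1})\in\F_p^m$ with $y_{j,0}\equiv x_j\pmod p$. Because $\mathcal I_j$ is a complete system of residues mod~$p$, the map $x_j\mapsto y_{j,0}$ is a bijection $\mathcal I_j\to\F_p$, and the higher Witt coordinates of an element of $\mathcal I_j$ are determined by $y_{j,0}$; by Lagrange interpolation over $\F_p$ one can write $y_{j,i}=\phi_{j,i}(y_{j,0})$ with $\phi_{j,i}\in\F_p[t]$ of degree at most $p-1$ (and $\phi_{j,i}\equiv 0$ for $i\ge 1$ in the Teichm\"uller case).

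\textbf{Encoding the congruences.} Computing $f_k(X)$ by Witt arithmetic modulo $p^{m_k}$ gives Witt components $F_{k,0},\ldots,F_{k,m_k-1}\in\F_p[\{y_{j,i}\}]$, and $f_k(X)\equiv 0\pmod{p^{m_k}}$ is equivalent to the simultaneous vanishing $F_{k,\ell}=0$ for $\ell\in[0,m_k-1]$. After the substitution $y_{j,i}\mapsto\phi_{j,i}(y_{j,0})$ for $i\ge 1$, we obtain polynomials $G_{k,\ell}\in\F_p[y_{1,0},\ldots,y_{n,0}]$, and $V$ is in bijection with the common $\F_p^n$-zero set of the $G_{k,\ell}$.

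\textbf{Degree control.} The key technical input is the bound $\deg G_{k,\ell}\le p^\ell\deg f_k$. This follows from the weighted-homogeneity of Witt polynomials: assign weight $p^i$ to the variable $y_{j,i}$; then the ghost component $w_\ell(y)=\sum_{i=0}^\ell p^iy_i^{p^{\ell-i}}$ is weight-homogeneous of weight $p^\ell$, and a routine induction via
\[F_{k,\ell}=p^{-\ell}\Bigl(w_\ell\bigl(f_k(X)\bigr)-\sum_{i<\ell}p^iF_{k,i}^{p^{\ell-i}}\Bigr)\]
shows that every monomial $\prod y_{j,i}^{e_{j,i}}$ occurring in $F_{k,\ell}$ satisfies $\sum_{j,i}p^ie_{j,i}\le p^\ell\deg f_k$. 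Since $\deg\phi_{j,i}\le p-1\le p^i$ for all $i\ge 1$, substitution yields
\[\deg G_{k,\ell}\;\le\;\max\Bigl\{\sum_je_{j,0}+(p-1)\sum_{j,\,i\ge 1}e_{j,i}\Bigr\}\;\le\;\max\Bigl\{\sum_{j,i}p^ie_{j,i}\Bigr\}\;\le\;p^\ell\deg f_k,\]
the maxima being over monomials of $F_{k,\ell}$.

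\textbf{Conclusion and main obstacle.} Applying Theorem~\ref{axkatzthm} with $q=p$ to $\{G_{k,\ell}\}_{k\in[1,s],\,\ell\in[0,m_k-1]}$ in $n$ variables, and using the identities $\sum_{\ell=0}^{m_k-1}p^\ell=(p^{m_k}-1)/(p-1)$ and $\max_{\ell\in[0,m_k-1]}p^\ell=p^{m_k-1}$, produces exactly the bound in Theorem~\ref{grythm}. The principal obstacle lies in the degree-control step: establishing the weighted-homogeneity of Witt components cleanly, and in particular tracking the contribution of the $p$-adic coefficients of $f_k$, which are themselves Witt vectors rather than scalars in $\F_p$. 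Once the weighted bound is secured, the passage from weighted to ordinary degree after substitution is the one-line inequality displayed above, and the remainder is bookkeeping.
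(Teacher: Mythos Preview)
Your proposal is correct and follows essentially the same route as the paper: parametrize each $\mathcal I_j$ by its first Witt (equivalently, Teichm\"uller) coordinate via polynomials of degree $\le p-1$ (the paper's Lemma~\ref{lemkkgg}(iii)), use the weighted homogeneity of the Witt addition polynomials to bound the degree of the $\ell$-th component by $p^\ell\deg f_k$ (the paper's Lemma~\ref{homlem} and the estimate \eqref{77aa}), and then apply Ax--Katz over $\F_p$. The only organizational difference is that the paper first substitutes the box parametrization into $f_k$ and then applies the $r$-fold Witt addition polynomial $s_\ell^{(r)}$ to the resulting sum of Teichm\"uller monomials, whereas you compute the Witt components of $f_k(X)$ with the $y_{j,i}$ free and substitute afterwards; the degree bookkeeping is identical.
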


Note that each $\mathcal I_j$ is a lifting of the prime field $\mathbb{F}_p$ in $\Z_p$ and thus the box $\mathcal B$ is a lifting of $\mathbb{F}_p^n$ in $\Z_p^n$.
Obviously, if $m_1=\cdots=m_s=1$, then Theorem \ref{grythm} recovers the Ax-Katz theorem for the prime finite field $\F_p$. The box $\mathcal B$ in Theorem \ref{grythm} allows many combinatorial applications. As suggested by Grynkiewicz in \cite{Grynkiewicz}, if there is an $m_k>1$ for some $k\in[1,s]$, one should appropriately choose the box $\mathcal B$ to apply Theorem \ref{grythm} to some problems in Combinatorial Number Theory. In other words, the elements in $\mathcal I_i$ should satisfy the proposition below. In our terminology, this just means that one should typically choose the
Teichm\"uller box.

\begin{proposition}\textup{(\cite[Proposition 1.4]{Grynkiewicz})}\label{prop}
Let $p$ be a prime  number and $m\in \N$. There exists a complete system of residues $\mathcal I\subseteq [0,p^m-1]$  modulo $p$ such that $$x^{p-1}\equiv
\left\{
  \begin{array}{ll}
    1 \pmod {p^m} & \hbox{if $x\not\equiv 0\pmod p$} \\
    0 \pmod {p^m} & \hbox{if $x\equiv 0\pmod p$,}
  \end{array}
\right.\quad\mbox{ for every $x\in \mathcal I$}.$$
\end{proposition}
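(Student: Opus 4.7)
The plan is to construct $\mathcal I$ explicitly as the set of Teichm\"uller representatives modulo $p^m$. The key fact to exploit is that in $\Z_p$ the polynomial $T^{p-1}-1$ splits completely, so each nonzero residue class modulo $p$ admits a canonical lift to a $(p-1)$-st root of unity in $\Z_p$, whose reduction modulo $p^m$ lands in $[0,p^m-1]$.

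First I would set $\omega(0)=0$, and for each $a\in[1,p-1]$ apply Hensel's lemma to $f(T)=T^{p-1}-1\in\Z_p[T]$. By Fermat's little theorem $f(a)\equiv 0\pmod p$, while $f'(a)=(p-1)a^{p-2}$ is a $p$-adic unit; Hensel therefore produces a unique $\omega(a)\in\Z_p$ with $\omega(a)^{p-1}=1$ and $\omega(a)\equiv a\pmod p$. Letting $\mathcal I\subseteq[0,p^m-1]$ be the set of reductions modulo $p^m$ of the $\omega(a)$ for $a\in[0,p-1]$, the congruence $\omega(a)\equiv a\pmod p$ makes $\mathcal I$ a complete system of residues modulo $p$, and the two required congruences follow at once: $\omega(a)^{p-1}=1$ in $\Z_p$ reduces to $\omega(a)^{p-1}\equiv 1\pmod{p^m}$ for $a\neq 0$, while $0^{p-1}=0$ handles the zero class.

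If one prefers a self-contained argument entirely inside $\Z/p^m\Z$, one can instead iterate the $p$-th power map: starting from any lift $a\in[0,p-1]$, the sequence $a,a^p,a^{p^2},\ldots$ stabilizes modulo $p^m$ once the index is at least $m-1$, because the elementary binomial congruence $(b+p^kc)^p\equiv b^p\pmod{p^{k+1}}$ combined with Fermat's little theorem gives, by induction on $k$, that $a^{p^{k+1}}\equiv a^{p^k}\pmod{p^{k+1}}$. The stable value $\omega(a)$ satisfies $\omega(a)^p\equiv\omega(a)\pmod{p^m}$, and cancelling the unit $\omega(a)$ when $a\not\equiv 0\pmod p$ yields the desired identity. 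I do not anticipate any genuine obstacle: the statement is essentially the classical existence of Teichm\"uller lifts, and either route settles it cleanly; the only small care point is verifying the index at which the iterated-Frobenius sequence stabilizes, which is immediate from the binomial estimate above.
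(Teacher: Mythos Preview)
Your proposal is correct. The paper does not supply its own proof of this proposition; it is simply quoted from \cite{Grynkiewicz}, and immediately before the statement the authors remark that ``in our terminology, this just means that one should typically choose the Teichm\"uller box,'' i.e., they identify the set $\mathcal I$ with the Teichm\"uller representatives $T_p$ reduced modulo $p^m$. Your construction via Hensel's lemma (or via the iterated $p$-th power map) is precisely the standard construction of $T_p$, so your argument is entirely in line with the paper's viewpoint and fills in the details the paper leaves implicit.
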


To prove and apply Theorem \ref{grythm}, Grynkiewicz \cite{Grynkiewicz} comprehensively utilized the weighted Weisman-Fleck congruence \cite{SunWan}, Wilson's arguments \cite{Wilson}, etc. His method is combinatorial in nature, and it is not clear how to use his method to extend Theorem \ref{grythm} from $\Z_p$ to $\Z_q$ with the box $\mathcal B$ being a lifting of $\mathbb{F}_q^n$ so that it would also include the general Ax-Katz theorem. In fact, we will give counter-examples showing
that the $\Z_q$ generalization of Theorem \ref{grythm} is false.  This suggests that the problem is more subtle for
$\Z_q$ than for $\Z_p$.

Another restriction in Theorem \ref{grythm} is that the box is in split form, that is, the $n$-dimensional box $\mathcal B$ is
the product of one dimensional boxes $\mathcal I_j$ for $1\leq j\leq n$.
In general, a box (a lifting of $\mathbb{F}_p^n$ in $\Z_p^n$) will not be in
such a split form.  We will also give counter-examples showing that Theorem \ref{grythm} is false for general non-split boxes.

Despite all these obstacles, our aim of this paper is to investigate the problem over $\Z_q$
and a general box $\mathcal B$ lifting $\mathbb{F}_q^n$, in an attempt to unify and hence extend
both the general Ax-Katz theorem and Theorem \ref{grythm}. This desired unification is achieved in this paper.
Our main result says that the desired $q$-divisibility theorem holds over $\Z_q$
as long as the box $\mathcal B$ (lifting $\mathbb{F}_q^n$) has low algebraic complexity, in the sense that it is
close to the Teichm\"uller box up to a low degree polynomial perturbation.
In the case $q=p$, any split box has low algebraic complexity, which explains Theorem \ref{grythm}.
There are many non-split boxes with low algebraic complexity, and thus our result significantly extends Theorem \ref{grythm}
as well, even in the case $q=p$.
We now make these more precise.
For simplicity of exposition, we only state some weaker but simpler consequences of our main result in this introduction.

Let $T_q$ be the set of Teichm\"{u}ller representatives of $\mathbb{F}_{q}$ in $\mathbb{Z}_q$.
The set $T_q^n$ is clearly a lifting of $\mathbb{F}_q^n$, and is called the Teichm\"{u}ller box.
It is the simplest and nicest box for our purpose. Our result for the Teichm\"uller box is the following
statement.

%We first extend Theorem \ref{grythm} to $\Z_q[x_1,\ldots,x_n]$ for the Teichm\"{u}ller box.

\begin{theorem}[Corollary \ref{cor3}]\label{cor1intro}
Let $p$ be a prime number and $q=p^h$ with $h\in \N$. Let $f_1,\dots, f_s\in \mathbb{Z}_q[x_1,\ldots,x_n]$ be a system of nonzero polynomials. For given $m_1,\dots,m_s\in \N$, let
\begin{equation*}
  V:=\{X\in T_q^n \mid  f_k(X)\equiv 0 \pmod {p^{m_k}} \mbox{ for all } k\in [1,s]\}.
\end{equation*}
Then
\begin{equation*}
\mathrm{ord}_q(|V|)\geq \left\lceil\frac{n-\sum_{k=1}^s\frac{p^{m_k}-1}{p-1}\deg (f_k)}{\max\nolimits_{k \in [1,s]}\{p^{m_k-1}\deg (f_k)\}}\right\rceil^*.
\end{equation*}
\end{theorem}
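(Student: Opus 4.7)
The plan is to reduce the congruence conditions $f_k(X)\equiv 0\pmod{p^{m_k}}$ to a polynomial system over $\mathbb{F}_q$ and invoke the Ax-Katz theorem (Theorem \ref{axkatzthm}). Identify $\mathbb{Z}_q$ with the Witt ring $W(\mathbb{F}_q)$, so that every $c\in \mathbb{Z}_q$ has a unique Teichm\"uller expansion $c=\sum_{j\geq 0}p^j[c_j]$, where $[\cdot]\colon\mathbb{F}_q\to\mathbb{Z}_q$ denotes the Teichm\"uller lift and $c_j\in\mathbb{F}_q$. For $X=([a_1],\ldots,[a_n])\in T_q^n$, setting $\bar X=(a_1,\ldots,a_n)$, multiplicativity of $[\cdot]$ gives $cX^u=\sum_{j\geq 0}p^j[c_j\bar X^u]$. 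Summing the Teichm\"uller digit sequences of all monomials of $f_k$ via Witt-vector addition yields polynomials $g_{k,i}\in\mathbb{F}_q[\bar x_1,\ldots,\bar x_n]$ such that the $i$-th Teichm\"uller digit of $f_k(X)$ equals $g_{k,i}(\bar X)$. Hence
\begin{equation*}
f_k(X)\equiv 0\pmod{p^{m_k}}\iff g_{k,0}(\bar X)=\cdots=g_{k,m_k-1}(\bar X)=0,
\end{equation*}
and the bijection $T_q^n\leftrightarrow\mathbb{F}_q^n$ identifies $V$ with the common zero set in $\mathbb{F}_q^n$ of $\{g_{k,i}:k\in[1,s],\,i\in[0,m_k-1]\}$.

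The crux is the degree bound $\deg g_{k,i}\leq p^i\deg(f_k)$. I would prove this by induction on the number of monomials summed, using the well-known structural property that the Witt addition polynomial $S_i(X_0,\ldots,X_i;Y_0,\ldots,Y_i)$ is weighted-homogeneous of weight $p^i$ when each $X_j,Y_j$ is assigned weight $p^j$. Every monomial $X_0^{r_0}\cdots X_i^{r_i}Y_0^{s_0}\cdots Y_i^{s_i}$ appearing in $S_i$ therefore satisfies $\sum_j(r_j+s_j)p^j=p^i$. Suppose inductively that both summands entering into Witt addition have $j$-th Teichm\"uller digit of $\bar X$-degree at most $p^j\deg(f_k)$, which holds for any single monomial $cX^u$ since its $j$-th digit has degree $|u|\leq \deg(f_k)\leq p^j\deg(f_k)$. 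Substituting these into a monomial of $S_i$ produces a polynomial of $\bar X$-degree at most $\sum_j(r_j+s_j)p^j\deg(f_k)=p^i\deg(f_k)$, preserving the inductive hypothesis.

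Applying Ax-Katz over $\mathbb{F}_q$ to the nonzero members of $\{g_{k,i}\}$ then yields
\begin{equation*}
\mathrm{ord}_q(|V|)\geq \left\lceil\frac{n-\sum_{k,i}\deg(g_{k,i})}{\max_{k,i}\deg(g_{k,i})}\right\rceil^*\geq \left\lceil\frac{n-\sum_{k=1}^s\frac{p^{m_k}-1}{p-1}\deg(f_k)}{\max_{k\in[1,s]}p^{m_k-1}\deg(f_k)}\right\rceil^*,
\end{equation*}
where the second inequality uses $\deg g_{k,i}\leq p^i\deg(f_k)$ together with $\sum_{i=0}^{m_k-1}p^i=\frac{p^{m_k}-1}{p-1}$ and $\max_{i\in[0,m_k-1]}p^i=p^{m_k-1}$. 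Identically zero $g_{k,i}$'s, which may arise when some coefficients of $f_k$ are divisible by $p$, are simply dropped from the Ax-Katz input; this only decreases the sum of degrees and the maximum degree and hence can only improve the bound. The main technical hurdle is the weighted-degree bookkeeping for Witt addition, which the homogeneity property of $S_i$ handles cleanly.
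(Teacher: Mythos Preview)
Your approach---reduce the congruences to polynomial equations over $\F_q$ via the Witt-vector structure, bound the degrees using weighted homogeneity of the addition polynomials, and invoke Ax--Katz---is exactly the paper's strategy. The paper organises the argument with the $r$-fold addition polynomial $S_i^{(r)}$ in one step (Lemma~\ref{homlem}, Theorem~\ref{smm}, Theorems~\ref{thm1} and~\ref{thm2}) rather than your induction on binary additions, but this difference is only cosmetic.

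There is, however, one genuine slip in the bookkeeping. You assert that substituting the $j$-th Teichm\"uller digits of the two summands into $S_i$ yields the $i$-th Teichm\"uller digit of the sum. This is false: under the isomorphism $W(\F_q)\cong\Z_q$ of \eqref{newmap}, the Witt coordinate in position $j$ is the $p^j$-th power of the $j$-th Teichm\"uller digit, and it is the Witt coordinates---not the digits---that $S_i$ acts on. In the paper's notation, the $i$-th Teichm\"uller digit of the sum is $\tilde s_i^{(r)}$, which involves fractional $p$-powers of the inputs and is not a polynomial in the digits; what \emph{is} a polynomial is $s_i^{(r)}=(\tilde s_i^{(r)})^{p^i}$, equal to $S_i^{(r)}$ evaluated at the $p^j$-th powers of the digits (Theorem~\ref{smm}), and one uses $s_i^{(r)}$ because it vanishes iff the digit does (Lemma~\ref{keylem}). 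Your induction is easily repaired by tracking Witt coordinates instead of digits: for a single monomial $cX^u$ the $j$-th Witt coordinate $(c_j\bar X^u)^{p^j}$ has $\bar X$-degree $p^j|u|\le p^j\deg(f_k)$, and the weighted homogeneity of $S_i$ then propagates the bound $p^jd$ on the $j$-th coordinate to $p^id$ on the $i$-th coordinate of the sum, exactly as you intended. With this correction the remainder of your argument, including the disposal of identically zero $g_{k,i}$'s, is correct.
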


In the case $m_1=\cdots = m_s=1$, this reduces to the Ax-Katz theorem over $\mathbb{F}_q$.
In fact, our proof in the general case reduces to this special case.

The possible extension from the Teichm\"uller box to a general box is more subtle.
Let us define a box $\mathcal B$ to be a subset of $\Z_q^n$ with $q^n$ elements such that $\mathcal B$ modulo $p$ is equal to $\F_q^n$. That is, $\mathcal B$ is a complete system of representatives of
$\F_q^n$ in $\Z_q^n$, equivalently, $\mathcal B$ is a lifting of $\F_q^n$ in $\Z_q^n$. In order to apply algebraic methods,
we would like to give an algebraic presentation of the combinatorial box $\mathcal B$, using the image of a polynomial map, following the spirit in \cite{Lai}.
As proved in Section \ref{sect4}, for any box $\mathcal B$, there exists a unique system of polynomials $g_{j}(X)\in\Z_q[x_1,\dots, x_n]$  ($1\leq j\leq n$) whose degree in each variable is at most $q-1$ such that for any $Y=(y_1, \dots, y_n)\in \mathcal B$, we have
\begin{equation}\label{zz1intro}
     Y=X + (g_{1}(X),\dots,g_{n}(X))p,
\end{equation}
where $X=(x_1, \dots, x_n)\in T_q^n$ is the Teichm\"uller lifting of the modulo $p$ reduction of $Y$.
In other words, the box $\mathcal B$ is simply the image of the Teichm\"uller box $T_q^n$ under the polynomial map
$X \longrightarrow X + (g_{1}(X),\dots,g_{n}(X))p$. This polynomial representation of the box $\mathcal B$ is unique since we require the
polynomials $g_j(X)$ to be reduced and thus have degrees at most $q-1$ in each variable, that is, we have reduced the polynomials modulo
the ideal $(x_1^q-x_1, \cdots, x_n^q-x_n)$. The total degree of $g_j$ is then bounded by $(q-1)n$.
The box $\mathcal B$ is called in split form or a split box if $\mathcal B = \mathcal I_1 \times \cdots \times \mathcal I_n$, where each $\mathcal I_j$
is a $1$-dimensional box in $\Z_q$ lifting $\mathbb{F}_q$. The box $\mathcal B$ is in split form, if and only if \eqref{zz1intro} becomes
\begin{equation*}
     Y=X + (g_{1}(x_1),\dots, g_{n}(x_n))p,
\end{equation*}
where each $g_{j}(x_j)$ depends only on the one variable $x_j$. In this case, each $g_j$ has total degree at most $q-1$,
much smaller than $n(q-1)$.

The degrees of the representing polynomials $g_j$'s provide a crude measure for the algebraic complexity of the
combinatorial box $\mathcal B$, see \cite{Lai} for a discussion of this in the case of finite fields. A random box $\mathcal B$ has high algebraic complexity, and hence algebraic
methods have limited values. One expects that a box of low algebraic complexity has some algebraic structure
and hence suitable for study using algebraic methods. This explains why we need a low degree bound on
the representing polynomials $g_j$ for the box $\mathcal B$ in the following theorems.

A polynomial $f\in\Z_q[x_1,\dots, x_n]$ is called a Teichm\"uller polynomial if all of its coefficients are
Teichm\"uller elements in $T_q$. Clearly, any polynomial $f\in\Z_q[x_1,\dots, x_n]$ has the unique expansion
$$f(X) = \sum_{i=0}^{\infty} p^i f_i(X),$$
where each $f_i(X)$ is a Teichm\"uller polynomial. This is called the Teichm\"uller expansion of $f$.
It is obtained from the Teichm\"uller expansion of the coefficients of $f$. For $x\in \R$, let $\lfloor x \rfloor$ denote the greatest integer less then or equal to $x$. Our result for a general box $\mathcal B$ in $\Z_q^n$ is as follows.

\begin{theorem}[Corollary \ref{thm44}]\label{introthm2}
Let $p$ be a prime number and $q=p^h$ with $h\in \N$. Let $\mathcal B$ be a general box in $\Z_q^n$ defined by the reduced polynomials $g_j \in \Z_q[x_1,\cdots, x_n]$ with $j\in [1,n]$ as above. Let $f_1,\dots, f_s\in \mathbb{Z}_q[x_1,\ldots,x_n]$ be a system of nonzero polynomials. For given $m_1,\dots,m_s\in \N$, let
\begin{equation*}
  V:=\{X\in \mathcal B \mid  f_k(X)\equiv 0 \pmod {p^{m_k}} \mbox{ for all } k\in [1,s]\}.
\end{equation*}
For $1\leq j\leq n$, write the Teichm\"uller expansion $pg_j(X) = \sum_{i=1}^{\infty} p^i g_{ij}(X)$. If $\deg(g_{ij})\leq
p^{h\lfloor\frac{i}{h}\rfloor}$ for all $j\in [1, n], i \in [1, m-1]$,
 then
\begin{equation*}
\mathrm{ord}_q(|V|)\geq \left\lceil\frac{n-\sum_{k=1}^s\frac{p^{m_k}-1}{p-1}\deg (f_k)}{\max\nolimits_{k \in [1,s]}\{p^{m_k-1}\deg (f_k)\}}\right\rceil^*.
\end{equation*}
\end{theorem}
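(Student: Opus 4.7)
The plan is to reduce Theorem \ref{introthm2} to the Teichm\"uller-box case of Theorem \ref{cor1intro} via the bijection $T_q^n \to \mathcal B$, $X \mapsto Y = X + pG(X)$ with $G = (g_1, \ldots, g_n)$, supplied by the box presentation in Section \ref{sect4}. Under this map, $V$ corresponds to
\begin{equation*}
V' := \{X \in T_q^n : f_k(X + pG(X)) \equiv 0 \pmod{p^{m_k}}\ \text{for all}\ k \in [1,s]\}.
\end{equation*}
It thus suffices to rewrite each congruence $f_k(X + pG(X)) \equiv 0 \pmod{p^{m_k}}$ as a system of polynomial equations over $\F_q$ in the reduction $x_0 \in \F_q^n$ of $X$, with controlled degree, and then apply the Ax-Katz theorem over $\F_q$ to the combined system.

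More precisely, for $X \in T_q^n$, the value $f_k(X + pG(X)) \in \Z_q$ admits a unique Teichm\"uller expansion $f_k(X + pG(X)) = \sum_{t \geq 0} p^t [\tau_{kt}(x_0)]$, with each $\tau_{kt} \in \F_q[X]$ a polynomial function on $\F_q^n$, so that $f_k(X + pG(X)) \equiv 0 \pmod{p^{m_k}}$ is equivalent to the vanishing of $\tau_{kt}(x_0)$ for $t \in [0, m_k-1]$. The crux of the proof is the degree bound $\deg \tau_{kt} \leq p^t \deg f_k$. Once this is in hand, Theorem \ref{axkatzthm} applied over $\F_q$ to the combined system $\{\tau_{kt}\}_{k \in [1,s],\, t \in [0, m_k-1]}$ yields exactly the claimed $q$-divisibility, because $\sum_{t=0}^{m_k-1} p^t \deg f_k = \frac{p^{m_k}-1}{p-1}\deg f_k$ and $\max_t p^t \deg f_k = p^{m_k-1}\deg f_k$.

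The main obstacle is establishing this degree bound, which requires tracking degrees through the Witt-addition polynomials of $W(\F_q)$. Writing $Y_j = x_j + pg_j(X)$ as a Witt vector and using the hypothesis $\deg g_{ij} \leq p^{h\lfloor i/h\rfloor}$ on its Teichm\"uller expansion, one first bounds the degree of each Witt component of $Y_j$ as a function of $x_0$. The step-function form $p^{h\lfloor i/h\rfloor} = q^{\lfloor i/h\rfloor}$ is precisely calibrated to the order-$h$ behavior of the $p$-th power Frobenius on $\F_q$: each application of Witt addition invokes Frobenius and raises degree by a factor of $p$, so over $h$ consecutive levels the degree inflates by a factor of $q = p^h$, and the hypothesis is exactly what survives. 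Composing with $f_k$ via another round of Witt addition then multiplies the bound by $\deg f_k$, yielding $\deg \tau_{kt} \leq p^t \deg f_k$. This Witt-arithmetic degree calculation, based directly on the addition operation in $W(\F_q)$, is the technical heart of the proof and is the transparent alternative to Grynkiewicz's combinatorial machinery emphasized in the abstract.
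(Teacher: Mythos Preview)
Your proposal is correct and follows essentially the same approach as the paper: pull back from $\mathcal B$ to $T_q^n$ via $X\mapsto X+pG(X)$, use the Witt addition polynomials to convert each congruence $f_k(Y)\equiv 0\pmod{p^{m_k}}$ into $m_k$ equations over $\F_q$ (the paper's $s_t^{(r)}=0$ for $t\in[0,m_k-1]$, your $\tau_{kt}=0$), establish the degree bound $\deg\tau_{kt}\le p^t\deg f_k$ by exploiting $a^{p^h}=a$ in $\F_q$ together with the hypothesis $\deg g_{ij}\le p^{h\lfloor i/h\rfloor}$, and finish with Ax--Katz. The paper carries out the degree bookkeeping explicitly via inequality \eqref{77aa} and the reduction $(\,\cdot\,)^{p^{i+|\beta|}}=(\,\cdot\,)^{p^{i+|\beta|-h\lfloor(i+|\beta|)/h\rfloor}}$ in $\F_q$, which is exactly the Frobenius calibration you describe heuristically in your final paragraph.
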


If $\mathcal B$ is the Teichm\"uller box, then $g_{ij}=0$ for all $i, j\geq 1$, and the condition $\deg(g_{ij})\leq
p^{h\lfloor\frac{i}{h}\rfloor}$ is trivially satisfied. Theorem \ref{introthm2} is thus a generalization of
Theorem \ref{cor1intro} from the Teichm\"uller box to a general box of low algebraic complexity.

In the case $q=p$ and thus $h=1$, we obtain the following simpler consequence.

\begin{theorem}
[Corollary \ref{cor8}]\label{cor2intro}
Let $p$ be a prime number. Let $\mathcal B$ be a general box as defined above by the polynomials $g_j(x_1,\cdots, x_n)
\in \Z_p[x_1,\cdots, x_n]$ with $j\in [1,n]$.
Let $f_1,\dots, f_s\in \mathbb{Z}_p[x_1,\ldots,x_n]$ be a system of nonzero polynomials. For given $m_1,\dots,m_s\in \N$, let
\begin{equation*}
  V:=\{X\in \mathcal B \mid  f_k(X)\equiv 0 \pmod {p^{m_k}} \mbox{ for all } k\in [1,s]\}.
\end{equation*}
If $\deg(g_{j})\leq p$ for all $j\in [1,n]$, then
\begin{equation}\label{oc1}
\mathrm{ord}_p(|V|)\geq \left\lceil\frac{n-\sum_{k=1}^s\frac{p^{m_k}-1}{p-1}\deg (f_k)}{\max\nolimits_{k \in [1,s]}\{p^{m_k-1}\deg (f_k)\}}\right\rceil^*.
\end{equation}
\end{theorem}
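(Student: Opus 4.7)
The plan is to derive this corollary as a direct specialization of Theorem \ref{introthm2} to the case $h=1$. What needs checking is that, when $q=p$, the total-degree hypothesis $\deg(g_j)\leq p$ on the polynomials describing the box already forces the Teichm\"uller-layer hypothesis $\deg(g_{ij})\leq p^{h\lfloor i/h\rfloor}$ required by the general theorem.

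First I would record two simplifications in the case $h=1$. On one hand, $p^{h\lfloor i/h\rfloor}=p^i$, so the condition of Theorem \ref{introthm2} simply reads $\deg(g_{ij})\leq p^i$ for $i\in[1,m-1]$. On the other hand, the Teichm\"uller representatives $T_p\subset\Z_p$ coincide with the standard digit set $\{0,1,\dots,p-1\}$, so the Teichm\"uller expansion of any polynomial $g\in\Z_p[x_1,\dots,x_n]$ is nothing but its coefficientwise base-$p$ expansion: writing each coefficient $c=\sum_{k\geq 0}c_k p^k$ with $c_k\in\{0,\dots,p-1\}$ yields $g=\sum_{k\geq 0}p^k g^{(k)}$, where each $g^{(k)}$ has monomial support contained in that of $g$, and in particular $\deg(g^{(k)})\leq\deg(g)$.

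Next I would apply this observation to each $g_j$. Writing $g_j=\sum_{k\geq 0}p^k g_j^{(k)}$, the preceding remark gives $\deg(g_j^{(k)})\leq\deg(g_j)\leq p$. Multiplying by $p$ only shifts indices, producing
\begin{equation*}
pg_j(X)=\sum_{i=1}^{\infty}p^i\,g_j^{(i-1)}(X),
\end{equation*}
and matching this against the Teichm\"uller expansion $pg_j(X)=\sum_{i=1}^{\infty}p^i g_{ij}(X)$ forces $g_{ij}=g_j^{(i-1)}$ by uniqueness of the Teichm\"uller expansion. Hence $\deg(g_{ij})\leq p$ for every $i\geq 1$ and every $j\in[1,n]$.

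Finally, since $p\leq p^i$ for every $i\geq 1$, the degree bound $\deg(g_{ij})\leq p^i$ demanded by Theorem \ref{introthm2} is satisfied on the whole relevant range of $i$, and invoking that theorem directly yields the divisibility bound \eqref{oc1}. There is essentially no real obstacle to speak of here: the substantive analytic work is already packaged into the parent theorem, and the only thing needing care is the harmless observation that, when $q=p$, the Teichm\"uller expansion is literally the coefficientwise base-$p$ digit expansion and therefore cannot increase total degrees.
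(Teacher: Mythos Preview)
Your overall approach is correct and matches the paper's: specialize Theorem~\ref{introthm2} (equivalently Corollary~\ref{thm44}) to $h=1$, and then verify that the coefficientwise nature of the Teichm\"uller expansion forces $\deg(g_{ij})\leq\deg(g_j)\leq p\leq p^i$.

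There is, however, a factual slip in your justification. For $p>2$ the Teichm\"uller set $T_p\subset\Z_p$ does \emph{not} coincide with the digit set $\{0,1,\dots,p-1\}$. The elements of $T_p$ are $0$ together with the $(p-1)$-th roots of unity in $\Z_p$; for example $T_3=\{0,1,-1\}$, and $-1\neq 2$ in $\Z_3$ since $2^2=4\neq 1$. Consequently the Teichm\"uller expansion and the ordinary base-$p$ digit expansion are genuinely different for $p>2$.

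This error is harmless for the argument. What you actually need is only that the Teichm\"uller expansion of a polynomial is obtained by expanding each coefficient in its own Teichm\"uller series (as the paper states explicitly when defining the Teichm\"uller expansion of $f$), so that the monomial support of every layer $g_j^{(k)}$ is contained in that of $g_j$. The bound $\deg(g_j^{(k)})\leq\deg(g_j)$ then follows immediately, with no need to identify $T_p$ with $\{0,1,\dots,p-1\}$. Drop the incorrect identification and appeal directly to this coefficientwise description; the rest of your proof stands.
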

In this theorem, if the box $\mathcal B$ is in split form, then $\deg(g_{j})\leq p-1$ and hence
the degree condition $\deg(g_{j})\leq p$
is automatically satisfied. In particular, \eqref{oc1} holds true for all split boxes $\mathcal B$, recovering Theorem \ref{grythm}.
Note that the above theorem is also true for many non-split boxes as long as the degrees of the $g_j$ are bounded by $p$.

We emphasize that Theorems \ref{cor1intro}, \ref{introthm2} and \ref{cor2intro} presented above are simpler weaker versions of our results. For their strong versions, which depend on the degree bounds in the $p$-adic expansion of the polynomials $g_j$'s and $f_k$'s, see Theorems \ref{thm2} and \ref{thm4}. Our basic idea is to use the addition operation of Witt vectors to reduce the congruence solution counting
in the box $\mathcal{B}$ to point counting of a system of equations over $\mathbb{F}_q$ for which the Ax-Katz theorem
can be applied. The key is to control the degrees of the resulting polynomial equations over $\mathbb{F}_q$. This
leads to the assumption on the degree bounds for the $g_j$'s, or more generally the degree bounds in the $p$-adic
Teichm\"uller expansions of the polynomials $g_j$'s and $f_k$'s.

The paper is organized as follows. Some basic knowledge about the Witt vectors is reviewed in Section \ref{sect2}. Then we apply the ring of Witt vectors over $\F_q$ to study the polynomials in $\mathbb{Z}_q[x_1,\ldots,x_n]$, which is divided into two parts: the generalization of Theorem \ref{grythm} to $\Z_q[x_1,\ldots,x_n]$ for the Teichm\"{u}ller box case is given in Section \ref{sect3}, and that for the general box is given in Section \ref{sect4}. At the end of this paper, we give examples showing that all the theorems
are false without the degree bounds of the representing polynomials $g_j$'s.

\section{Prerequisites}\label{sect2}

Witt vector rings and their variants are a useful tool in many branches of mathematics ranging from algebra and algebraic number theory to arithmetic geometry and homotopy theory. In this section, we only review the construction and simple properties of the classical $p$-typical Witt vectors of Witt and Teichm\"uller \cite{Witt}; for generalized or big Witt vectors, refer to \cite{Cartier,rabinoff}. Using the $p$-typical Witt vectors one may pass from a perfect field $K$ of characteristic $p$ to unramified complete discrete valuation ring with the residue field $K$ and quotient field of characteristic zero.

\subsection{The ring of $p$-typical Witt vectors}
Let $R$ be a commutative ring with identity and $\mathbb N_0=\{0\}\cup \N$. The underlying set of the ring of $p$-typical Witt vectors over $R$ is the set
$$W(R)=R^{\mathbb N_0}=\{(a_0,a_1,\dots) \mid a_i\in R\}.$$
Now we explore the mysterious algebraic structure of $W(R)$. For $n\in \mathbb N_0$, the $n$-th Witt polynomial is defined to be
\begin{equation}\label{wittpoly}
\omega_n(x_0,x_1,\dots,x_n):=x_0^{p^n}+px_1^{p^{n-1}}+\cdots+p^nx_n.
\end{equation}
\begin{remark}\label{homfact}
  If we define $\mathrm{wt}(x_i)=p^i$, then $\omega_n$ is weighted homogeneous of weighted degree $p^n$.
\end{remark}
Using the Witt polynomials, we can establish the so-called ghost (or phantom) map
  \begin{equation}\label{ghostmap}
     \omega: W(R) \rightarrow R^{\mathbb N_0}, \quad \mathbf a=(a_0,a_1,\dots) \mapsto \omega(\mathbf a)=(\omega_0(a_0),\omega_1(a_0,a_1),\dots),
  \end{equation}
  where $\omega_n(a_0,a_1,\dots,a_n)$ is called the $n$-th ghost (or phantom) component of $\mathbf a$. The ring $W(R)$ of $p$-typical Witt vectors over $R$ is defined by componentwise addition and multiplication via the ghost components, which was found by the pioneering and ingenious work of Witt \cite{Witt}. Let $\oplus$ and $\odot$ denote the addition and multiplication in the ring $W(R)$, respectively.
\begin{theorem}[Witt]\label{wittthm}
  There are two families of polynomials with integer coefficients
  \begin{align*}
    S_n(x_0,y_0;x_1, y_1;\dots;x_n,y_n),\quad  M_n(x_0,y_0;x_1, y_1;\dots;x_n,y_n), \quad n\in \mathbb N_0,
  \end{align*}
  such that for $\mathbf{a}=(a_0,a_1,\dots), \mathbf{b}=(b_0,b_1,\dots) \in W(R)$, we have
  \begin{itemize}
    \item[(i)]$\mathbf{a}\oplus\mathbf{b} = (S_0(a_0,b_0), S_1(a_0,b_0;a_1,b_1), \dots)$,
    \item[(ii)]$\mathbf{a}\odot\mathbf{b} = (M_0 (a_0,b_0), M_1(a_0,b_0;a_1,b_1), \dots)$,
    \item[(iii)] $\omega(\mathbf{a}\oplus\mathbf{b})=\omega(\mathbf{a})+\omega(\mathbf{b})$,
    \item[(iv)]$\omega(\mathbf{a}\odot\mathbf{b})=\omega(\mathbf{a})\omega(\mathbf{b})$.
  \end{itemize}
\end{theorem}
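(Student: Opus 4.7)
The plan is to construct the polynomials $S_n, M_n \in \mathbb{Z}[x_0,y_0,\ldots,x_n,y_n]$ by solving the ghost-map identities (iii), (iv) recursively, and then take (i), (ii) as the \emph{definition} of $\oplus, \odot$ on $W(R)$ for a general commutative ring $R$. Applied to universal variables $\mathbf{x}=(x_0,x_1,\ldots)$, $\mathbf{y}=(y_0,y_1,\ldots)$, condition (iii) reads
\[
\omega_n(S_0,S_1,\ldots,S_n) \;=\; \omega_n(x_0,\ldots,x_n) + \omega_n(y_0,\ldots,y_n)
\]
for each $n\in\mathbb{N}_0$, and similarly for (iv). Because $\omega_n$ is linear in $x_n$ with coefficient $p^n$, this solves uniquely for $S_n$ in terms of $S_0,\ldots,S_{n-1}$ and the $x_i,y_i$, but a priori only over $A[1/p]$, where $A:=\mathbb{Z}[x_0,y_0,x_1,y_1,\ldots]$. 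The entire content of the theorem is that every $S_n$ and $M_n$ produced this way actually has integer coefficients.

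The key ingredient I will need is a Dwork-style integrality lemma: let $\phi:A\to A$ be the ring endomorphism defined by $\phi(x_i)=x_i^p$, $\phi(y_i)=y_i^p$. Then a sequence $(w_n)_{n\geq 0}$ in $A$ is of the form $w_n=\omega_n(s_0,\ldots,s_n)$ for some $s_n\in A$ if and only if
\[
\phi(w_n) \;\equiv\; w_{n+1} \pmod{p^{n+1}A} \quad \text{for all } n\geq 0.
\]
I would prove this by induction: given $s_0,\ldots,s_{n-1}$, the identity $\omega_n(s_0,\ldots,s_n)=\omega_{n-1}(s_0^p,\ldots,s_{n-1}^p)+p^n s_n$ shows $s_n$ is integral iff $w_n\equiv \omega_{n-1}(s_0^p,\ldots,s_{n-1}^p)\pmod{p^nA}$. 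Since any $s\in A$ satisfies $\phi(s)\equiv s^p\pmod{pA}$ (by the multinomial theorem together with Fermat's little theorem on the integer coefficients), and each monomial $p^i x^{p^{n-1-i}}$ in $\omega_{n-1}$ turns a mod-$p$ congruence into one mod $p^{n-i}$, one obtains $\omega_{n-1}(\phi(s_0),\ldots,\phi(s_{n-1}))\equiv\omega_{n-1}(s_0^p,\ldots,s_{n-1}^p)\pmod{p^n}$. Combining the inductive hypothesis $\omega_{n-1}(s_0,\ldots,s_{n-1})=w_{n-1}$, i.e.\ $\phi(w_{n-1})=\omega_{n-1}(\phi(s_0),\ldots,\phi(s_{n-1}))$, with the assumed congruence $\phi(w_{n-1})\equiv w_n\pmod{p^n}$, closes the induction.

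To apply the lemma, I verify its hypothesis for the sequences $w_n^\oplus:=\omega_n(\mathbf{x})+\omega_n(\mathbf{y})$ and $w_n^\odot:=\omega_n(\mathbf{x})\omega_n(\mathbf{y})$. The pivotal observation is the monomial identity
\[
\omega_{n+1}(x_0,\ldots,x_{n+1}) \;-\; \phi\bigl(\omega_n(x_0,\ldots,x_n)\bigr) \;=\; p^{n+1}x_{n+1},
\]
so $\phi(\omega_n(\mathbf{x}))\equiv\omega_{n+1}(\mathbf{x})\pmod{p^{n+1}}$. Adding the analogous congruence in $\mathbf{y}$ handles the sum; multiplying them (and using that residues mod $p^{n+1}$ form a ring) handles the product. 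The lemma then delivers $S_n, M_n\in A$ satisfying (iii) and (iv).

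Finally, for arbitrary $R$, define $\oplus$ and $\odot$ on $W(R)$ via (i), (ii) using the integral $S_n, M_n$ just constructed. The ring axioms (associativity, commutativity, distributivity, neutrality of $(0,0,\ldots)$ and $(1,0,0,\ldots)$) amount to polynomial identities in $\mathbb{Z}[\mathbf{x},\mathbf{y},\mathbf{z}]$, which can be verified after applying the universal ghost map: over $A[1/p]$ the map $\omega$ is an isomorphism onto the product ring with componentwise operations, and there the axioms are immediate. Specialization transfers them to $W(R)$. The main obstacle throughout is the integrality statement supplied by the Dwork-type lemma; once that is granted, everything else is formal bookkeeping.
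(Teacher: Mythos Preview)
Your proposal is correct and follows the standard modern proof of Witt's theorem via the Dwork-type integrality criterion (essentially the argument in Serre's \emph{Local Fields} or Bourbaki). Note, however, that the paper does \emph{not} actually prove this theorem: it is stated as a classical result attributed to Witt \cite{Witt} and used as a black box in the prerequisites section, with no argument given. So there is nothing in the paper to compare your proof against; you have supplied a complete proof where the paper simply cites the literature.
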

 If $p$ is invertible in the ring $R$, then the ring homomorphism $\omega: W(R) \rightarrow R^{\mathbb N_0}$ induced by the ghost map \eqref{ghostmap} is an isomorphism, i.e., $W(R) \cong R^{\mathbb N_0}$. It is obvious that the polynomials $S_n$ and $M_n$ are determined by the first $n+1$ coordinates of the Witt vectors and their coefficients do not depend upon the ring $R$. In particular, one can calculate
\begin{align*}
% \nonumber to remove numbering (before each equation)
  S_0 &=x_0+y_0, \quad  S_1=x_1+y_1-\sum_{i=1}^{p-1}\frac{1}{p}{p \choose i}x_0^iy_0^{p-i},\\
  M_0 &=x_0y_0, \quad  M_1=x_0^py_1+x_1y_0^p+px_1y_1.
\end{align*}

\subsection{Polynomials $S_n$ and $M_n$ for $r$-fold operation}
The calculations of $S_n$ and $M_n$ for big $n$ are very complicated. However, for the purpose of this paper, we are more concerned with the degree of the polynomial $S_n$ for $r$-fold addition. We also give the degree of the polynomial $M_n$ for $r$-fold multiplication for completeness.

Let $r\in \N$. For $j\in [1,r]$, write $X_j=(x_{0j},\dots,x_{nj},\dots)$, and
\begin{equation}\label{sm}
(S_{0}^{(r)},\dots,S_{n}^{(r)},\dots)=X_1\oplus\cdots\oplus X_r, \quad (M_{0}^{(r)},\dots,M_{n}^{(r)},\dots)=X_1\odot\cdots\odot X_r.
\end{equation}
\begin{lemma}\label{homlem}
  Both $S_{n}^{(r)}$ and $M_{n}^{(r)}$ are polynomials with integer coefficients in $(n+1)r$ variables $x_{ij}(i\in [0,n], j\in [1,r])$. If we set $\mathrm{wt}(x_{ij})=p^i$ for $i\in [0,n]$ and $j\in [1,r]$, then $S_{n}^{(r)}$ is weighted homogeneous of weighted degree $p^n$, and $M_{n}^{(r)}$ is weighted homogeneous of weighted degree $rp^n$. More generally, let $d\in \N$, if we set $\mathrm{wt}(x_{ij})\leq dp^i$ for $i\in [0,n]$ and $j\in [1,r]$, then $S_{n}^{(r)}$ is of weighted degree $\leq dp^n$, and $M_{n}^{(r)}$ is of weighted degree $\leq rdp^n$.
\end{lemma}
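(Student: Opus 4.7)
The plan is to deduce the weighted homogeneity of $S_n^{(r)}$ and $M_n^{(r)}$ from the weighted homogeneity of the Witt polynomials $\omega_n$ (Remark \ref{homfact}) by induction on $n$, using the iterated ghost identities. Because the operations $\oplus$ and $\odot$ on $W(R)$ are associative, repeated application of parts (iii) and (iv) of Theorem \ref{wittthm} yields the polynomial identities in $\Z[x_{ij}]$
$$\omega_n(S_0^{(r)},\dots,S_n^{(r)})=\sum_{j=1}^{r}\omega_n(x_{0j},\dots,x_{nj}),\qquad \omega_n(M_0^{(r)},\dots,M_n^{(r)})=\prod_{j=1}^{r}\omega_n(x_{0j},\dots,x_{nj}).$$
Under the weighting $\mathrm{wt}(x_{ij})=p^i$, each factor $\omega_n(x_{0j},\dots,x_{nj})$ is weighted homogeneous of weighted degree $p^n$ by Remark \ref{homfact}, so the right-hand sides are weighted homogeneous of weighted degrees $p^n$ and $rp^n$ respectively.

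Now I would run an induction on $n$. The base case is $S_0^{(r)}=x_{01}+\cdots+x_{0r}$, of weighted degree $1=p^0$, and $M_0^{(r)}=x_{01}\cdots x_{0r}$, of weighted degree $r=rp^0$. For the inductive step in the additive case, I would expand
$$\omega_n(S_0^{(r)},\dots,S_n^{(r)})=\sum_{i=0}^{n-1}p^i\bigl(S_i^{(r)}\bigr)^{p^{n-i}}+p^n S_n^{(r)}$$
by the definition \eqref{wittpoly}. By induction, $S_i^{(r)}$ is weighted homogeneous of weighted degree $p^i$ for each $i<n$, so every $(S_i^{(r)})^{p^{n-i}}$ is weighted homogeneous of weighted degree $p^n$. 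Rearranging shows that $p^n S_n^{(r)}$ is weighted homogeneous of weighted degree $p^n$; since $\Z[x_{ij}]$ is torsion-free and scaling by the nonzero integer $p^n$ preserves the monomial support, $S_n^{(r)}$ itself is weighted homogeneous of weighted degree $p^n$. The argument for $M_n^{(r)}$ is word-for-word the same, using the product identity and target degree $rp^n$.

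The generalized statement with $\mathrm{wt}(x_{ij})\le dp^i$ requires only that equalities of weighted degree be replaced by inequalities everywhere: each monomial of $\omega_n(x_{0j},\dots,x_{nj})$ now has weighted degree $\le dp^n$, the inductive hypothesis gives $(S_i^{(r)})^{p^{n-i}}$ of weighted degree $\le dp^n$ and $(M_i^{(r)})^{p^{n-i}}$ of weighted degree $\le rdp^n$, and solving for $S_n^{(r)}$ and $M_n^{(r)}$ as above yields the required bounds. I do not expect a genuine obstacle here; the only mildly delicate point is the legitimacy of isolating $S_n^{(r)}$ (resp. $M_n^{(r)}$) from its $p^n$ multiple, which is harmless because we manipulate identities in the torsion-free ring $\Z[x_{ij}]$, where scaling by $p^n\neq0$ does not change the weighted degree of any monomial.
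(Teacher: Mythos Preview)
Your proof is correct and follows the same inductive approach as the paper, using the ghost identities to isolate $p^nS_n^{(r)}$ (resp.\ $p^nM_n^{(r)}$) and then dividing by $p^n$ in the torsion-free ring $\Z[x_{ij}]$. The paper adds one small observation you omit: since the variables $x_{nj}$ appear in $\sum_j\omega_n(X_j)$ but not in $\sum_{i<n}p^i(S_i^{(r)})^{p^{n-i}}$, the polynomial $S_n^{(r)}$ is genuinely nonzero and involves all $(n+1)r$ variables, which is needed to assert weighted degree exactly $p^n$ rather than merely $\leq p^n$.
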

\begin{proof}
 It immediately follows from \eqref{sm} and Theorem \ref{wittthm} that the polynomial $S_{n}^{(r)}$ has integer coefficients and that
\begin{equation*}
  \omega_n(S_{0}^{(r)},\dots,S_{n}^{(r)})=\omega_n(X_1)+\cdots+\omega_n(X_r),
\end{equation*}
which in expansion by \eqref{wittpoly} is
\begin{equation}\label{epp}
  (S_{0}^{(r)})^{p^n}+p(S_{1}^{(r)})^{p^{n-1}}+\cdots+p^n(S_{n}^{(r)})=(x_{01}^{p^n}+\cdots+x_{0r}^{p^n})+p(x_{11}^{p^{n-1}}+\cdots+x_{1r}^{p^{n-1}})+p^n(x_{n1}+\cdots+x_{nr}).
\end{equation}
Thus we have
\begin{equation}\label{ss}
  S_{n}^{(r)}=\frac{1}{p^n}\left(\sum_{i=1}^{r}\omega_n(X_i)-\sum_{i=0}^{n-1}p^i(S_{i}^{(r)})^{p^{n-i}}\right).
\end{equation}
  Since $S_{n}^{(r)}$ has integer coefficients, the factor $\frac{1}{p^n}$ in \eqref{ss} will be cancelled eventually. Set $\mathrm{wt}(x_{ij})=p^i$ for $i\in [0,n]$, $j\in [1,r]$. We make use of induction on $n$ to show that $S_{n}^{(r)}$ is a weighted homogeneous polynomial of weighted degree $p^n$ in $(n+1)r$ variables $x_{ij}(i\in [0,n], j\in [1,r])$. The case of $n=0$ in which $S_{0}^{(r)}=x_{01}+\cdots+x_{0r}$ is trivially verified. We assume that $S_{k}^{(r)}$ is a weighted homogeneous polynomial of weighted degree $p^k$ in $(k+1)r$ variables $x_{ij}(i\in [0,k], j\in [1,r])$ for $0\leq k\leq n-1$. Then the sum $\sum_{i=0}^{n-1}p^i(S_{i}^{(r)})^{p^{n-i}}$ in \eqref{ss} is a weighted homogeneous polynomial of weighted degree $p^{n}$ in $nr$ variables $x_{ij}(i\in [0,n-1], j\in [1,r])$. Note that by \eqref{epp} the sum $\sum_{i=1}^{r}\omega_n(X_i)$ in \eqref{ss} is weighted homogeneous of weighted degree $p^{n}$ in $(n+1)r$ variables $x_{ij}(i\in [0,n], j\in [1,r])$ with the variables $x_{nj}(j\in [1,r])$ not occurring in $\sum_{i=0}^{n-1}p^i(S_{i}^{(r)})^{p^{n-i}}$, which implies that $S_{n}^{(r)}\neq 0$. Thus we conclude that $S_{n}^{(r)}$ is a weighted homogeneous polynomial of weighted degree $p^n$ in $(n+1)r$ variables $x_{ij}(i\in [0,n], j\in [1,r])$. The other results can be similarly deduced.
\end{proof}
\begin{remark}\label{snmn}
  \textup{(i)} The weighted degree of $S_{n}^{(r)}$ does not depend upon $r$, but $M_{n}^{(r)}$ does.

  \textup{(ii)} To indicate explicitly the variables as well as their order in $S_n^{(r)}$ and $M_n^{(r)}$, we write
  \begin{align*}
  S_n^{(r)}:& =S_n^{(r)}(x_{01},\dots,x_{0r};x_{11},\dots,x_{1r};\dots;x_{n1},\dots,x_{nr}), \mbox{  and  } \\
  M_n^{(r)}:& =M_n^{(r)}(x_{01},\dots,x_{0r};x_{11},\dots,x_{1r};\dots;x_{n1},\dots,x_{nr}).
    \end{align*}

   \textup{(iii)}  For later applications, we record the explicit formulae for $S_0^{(r)}$ and $S_1^{(r)}$.
   \begin{align*}   S_0^{(r)} &= x_{01}+\cdots + x_{0r},\\
   S_1^{(r)} &= x_{11}+\cdots + x_{1r} - \frac{1}{p}\sum_{t_1+\cdots + t_r=p, \ 0\leq t_i\leq p-1} {p\choose {t_1, \cdots, t_r}}x_{01}^{t_1}\cdots
   x_{0r}^{t_r}.
 \end{align*}
\end{remark}

\subsection{Perfect rings with characteristic $p$}
In this subsection, we always let $R$ be a perfect ring with characteristic $p$, which means that the Frobenius map $\phi: a\mapsto a^p$ is an automorphism. Let $W(R)$ denote the ring of Witt vectors over $R$. The Teichm\"uller lifting is defined by
  \begin{equation*}
     \tau: R \hookrightarrow W(R), \quad  a \mapsto \tau(a)=(a,0,0,\dots),
  \end{equation*}
and $\tau(a)$ is called the Teichm\"uller representative of the element $a$. Let
  \begin{equation*}
     K_R:=\{\tau(a_0)+\tau(a_1)p+\tau(a_2)p^2+\cdots \mid a_i\in R, i=0,1,2,\dots\}.
  \end{equation*}
Then $K_R$ will be a $p$-adic ring under the usual addition and multiplication via its isomorphism with $W(R)$.  Moreover, if $R$ is a field, then $K_R$ is a complete discrete valuation ring of zero characteristic with residue field $R$ and maximal ideal $pK_R$. Each element $\mathbf (a_0,a_1,a_2\dots)\in W(R)$ can be uniquely represented in $K_R$ as
\begin{equation*}
  \tau(a_0)+\tau(a_1)p+\tau(a_2)p^2+\cdots
\end{equation*}
However, this bijection is not a ring isomorphism between $W(R)$ and $K_R$ because it does not respect the addition. Since $R$ is a perfect ring with characteristic $p$, we have $R\cong R^p$ via the Frobenius map $\phi: a\mapsto a^p$. The true ring isomorphism between $W(R)$ and $K_R$ is denoted by $\tau$ again and given explicitly by
\begin{equation*}
  \tau: W(R) \rightarrow K_R,\quad  (a_0,a_1,a_2,\dots)\mapsto \tau(a_0)+\tau(a_1)^{p^{-1}}p+\tau(a_2)^{p^{-2}}p^2+\cdots
\end{equation*}
We usually adopt the alternative expression for $\tau$ given as below
\begin{equation}\label{newmap}
  \tau: W(R) \rightarrow K_R,\quad  (a_0,a_1^p,a_2^{p^2},\dots)\mapsto \tau(a_0)+\tau(a_1)p+\tau(a_2)p^2+\cdots
\end{equation}
The advantage of the expression \eqref{newmap} lies in that it makes the Witt polynomials become homogenous (cf. Remark \ref{homfact}) and hence the polynomial $S_{n}^{(r)}$ is weighted homogeneous of weighted degree $p^n$ by Lemma \ref{homlem}.
\begin{example}
  A well-known example is that $W(\F_q)\cong\Z_q$. In particular, the finite Witt ring $W(\F_q)/p^mW(\F_q)$
  becomes $\Z_q/p^m\Z_q$. We will discuss this example in detail in the next section.
\end{example}

Now let $r\in \N$, we dicuss the $r$-fold addition and the $r$-fold multiplication in $K_R$. For $j\in [1,r]$, let
$$X_j=(x_{0j},x_{1j}^p,x_{2j}^{p^2},\cdots)\in W(R).$$
Then,
$$\tau(X_j)=\sum_{i}^{\infty}\tau(x_{ij})p^i\in K_R,$$where $\tau$ denotes the ring isomorphism between $W(R)$ and $K_R$ given by \eqref{newmap}. We want to find two functions $\widetilde{s}_n^{(r)}$ and $\widetilde{m}_n^{(r)}$, which behave like $S_n^{(r)}$ and $M_n^{(r)}$ as defined in Lemma \ref{homlem}, such that
\begin{equation}\label{smallsm}
\sum_{j=1}^{r}\left(\sum_{i=0}^{\infty}\tau(x_{ij})p^i\right)=\sum_{n=0}^{\infty}\tau(\widetilde{s}_n^{(r)})p^n, \quad \mbox{     and     }  \quad \prod_{j=1}^{r}\left(\sum_{i=0}^{\infty}\tau(x_{ij})p^i\right)=\prod_{n=0}^{\infty}\tau(\widetilde{m}_n^{(r)})p^n.
\end{equation}

The formulae for $\widetilde{s}_n^{(r)}$ and $\widetilde{m}_n^{(r)}$ are given below, and the proof for $r=2$ can also be found in \cite[Theorem 1.5]{rabinoff}.

\begin{theorem}\label{smm} With the above notation, for $n\in \N_0$ we have
  \begin{align*}
  \widetilde{s}_n^{(r)}& =S_n^{(r)}(x_{01}^{1/{p^n}},\dots,x_{0r}^{1/{p^n}};x_{11}^{1/{p^{n-1}}},\dots,x_{1r}^{1/{p^{n-1}}};\dots;x_{n1},\dots,x_{nr}), \mbox{  and  } \\
  \widetilde{m}_n^{(r)}& =M_n^{(r)}(x_{01}^{1/{p^n}},\dots,x_{0r}^{1/{p^n}};x_{11}^{1/{p^{n-1}}},\dots,x_{1r}^{1/{p^{n-1}}};\dots;x_{n1},\dots,x_{nr}).
  \end{align*}
  Let $s_n^{(r)}:=(\widetilde{s}_n^{(r)})^{p^n}$ and $m_n^{(r)}:=(\widetilde{m}_n^{(r)})^{p^n}$, then we have
    \begin{align*}
s_n^{(r)}& =S_n^{(r)}(x_{01},\dots,x_{0r};x_{11}^{p},\dots,x_{1r}^{p};\dots;x_{n1}^{p^n},\dots,x_{nr}^{p^n}), \mbox{  and  } \\
m_n^{(r)}& =M_n^{(r)}(x_{01},\dots,x_{0r};x_{11}^{p},\dots,x_{1r}^{p};\dots;x_{n1}^{p^n},\dots,x_{nr}^{p^n}).
  \end{align*}
  The polynomials $s_n^{(r)}$ and $m_n^{(r)}$ have integer integers. Moreover, $s_n^{(r)}$ is homogeneous of degree $p^n$ and $m_n^{(r)}$ is homogeneous of degree $rp^n$ in the variables $x_{ij}$.
\end{theorem}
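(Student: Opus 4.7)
The plan is to transport the computation from $K_R$ back to $W(R)$ via the ring isomorphism $\tau$ of \eqref{newmap}. For $j\in[1,r]$, set $X_j=(x_{0j},x_{1j}^p,x_{2j}^{p^2},\ldots)\in W(R)$; then by \eqref{newmap}, $\tau(X_j)=\sum_{i\geq 0}\tau(x_{ij})p^i$, which is precisely the $j$-th summand or factor on the left of \eqref{smallsm}. Since $\tau$ is a ring isomorphism, $\tau(X_1\oplus\cdots\oplus X_r)=\sum_j\tau(X_j)$ and $\tau(X_1\odot\cdots\odot X_r)=\prod_j\tau(X_j)$ in $K_R$, so it suffices to compute the $n$-th coordinates of the Witt sum and product in $W(R)$ and then transfer them through $\tau$.

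By \eqref{sm}, the $n$-th coordinate of $X_1\oplus\cdots\oplus X_r$ is $S_n^{(r)}$ evaluated on the first $n+1$ entries of each $X_j$, i.e.\ on $x_{ij}^{p^i}$ for $i\in[0,n]$; this coordinate is precisely the polynomial $s_n^{(r)}$ named in the theorem. To feed this Witt vector into $\tau$ as written in \eqref{newmap}, one rewrites the $n$-th entry as $a_n^{p^n}$ with $a_n=(s_n^{(r)})^{1/p^n}$, which is a well-defined element of $R$ because $R$ is perfect; then \eqref{newmap} gives $\sum_n\tau\bigl((s_n^{(r)})^{1/p^n}\bigr)p^n$. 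Comparing with the right side of \eqref{smallsm} forces $\widetilde{s}_n^{(r)}=(s_n^{(r)})^{1/p^n}$, and the multiplicative case is identical with $M_n^{(r)}$ and $m_n^{(r)}$ replacing $S_n^{(r)}$ and $s_n^{(r)}$.

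It remains to identify these $p^n$-th roots with the claimed substitution formulae and to record integrality and homogeneity. Here I would invoke Lemma \ref{homlem}: with weight $p^i$ assigned to the $i$-th group of variables of $S_n^{(r)}$, the polynomial $S_n^{(r)}$ is weighted homogeneous of weighted degree $p^n$ with integer coefficients, so every monomial of $s_n^{(r)}$ has the form $c\prod x_{ij}^{e_{ij}p^i}$ with $c\in\Z$ and $\sum_{i,j} e_{ij}p^i=p^n$; this yields the stated formula for $s_n^{(r)}$ and simultaneously shows $s_n^{(r)}$ is an ordinary homogeneous polynomial of total degree $p^n$ with integer coefficients. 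The analogous computation using weighted degree $rp^n$ for $M_n^{(r)}$ handles $m_n^{(r)}$. Finally, $p^n$-th roots in the perfect characteristic $p$ ring $R$ distribute over sums via the inverse Frobenius and fix $\F_p\subseteq R$ (where the integer coefficients live), so each monomial of $s_n^{(r)}$ contributes $c\prod x_{ij}^{e_{ij}/p^{n-i}}$ to $\widetilde{s}_n^{(r)}$, matching $S_n^{(r)}(x_{01}^{1/p^n},\ldots;x_{11}^{1/p^{n-1}},\ldots;\ldots;x_{n1},\ldots,x_{nr})$ as claimed. The one subtle point is that $\widetilde{s}_n^{(r)}$ is not itself a polynomial in the $x_{ij}$ (its exponents are fractional), but it is a well-defined element of $R$ precisely because $R$ is perfect; the clean polynomial statements apply only to the $p^n$-th powers $s_n^{(r)}$ and $m_n^{(r)}$.
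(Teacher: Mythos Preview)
Your proposal is correct and follows essentially the same route as the paper: both arguments set $X_j=(x_{0j},x_{1j}^p,x_{2j}^{p^2},\dots)$, compute the $n$-th Witt coordinate of $X_1\oplus\cdots\oplus X_r$ as $S_n^{(r)}$ evaluated at $x_{ij}^{p^i}$, push through the isomorphism $\tau$ of \eqref{newmap}, and compare with \eqref{smallsm} using perfectness of $R$ to handle the $p^{-n}$-th powers, then invoke Lemma~\ref{homlem} for integrality and homogeneity. The only cosmetic difference is that you name $s_n^{(r)}$ first (as the Witt coordinate) and derive $\widetilde{s}_n^{(r)}$ as its $p^n$-th root, whereas the paper first identifies $\widetilde{s}_n^{(r)}$ and then defines $s_n^{(r)}$ as its $p^n$-th power.
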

\begin{proof}
  We only consider the $r$-fold addition. Let $\oplus$ denote the addition in the ring $W(R)$. Like in \eqref{sm}, we have
\begin{equation}\label{sm22}
(S_{0}^{[r]},\dots,S_{n}^{[r]},\dots)=X_1\oplus\cdots\oplus X_r,
\end{equation}
where
$$S_n^{[r]} = S_n^{(r)}(x_{01},\dots,x_{0r};x_{11}^{p},\dots,x_{1r}^{p};\dots;x_{n1}^{p^n},\dots,x_{nr}^{p^n}).$$
 Applying the map $\tau$ to the two sides of \eqref{sm22} and combining \eqref{smallsm} yields
\begin{equation*}
\sum_{n=0}^{\infty}\tau(S_{n}^{[r]})^{p^{-n}}p^n=\sum_{j=1}^{r}\tau(X_j)=\sum_{j=1}^{r}\left(\sum_{i=0}^{\infty}\tau(x_{ij})p^i\right)=\sum_{n=0}^{\infty}\tau(\widetilde{s}_n^{(r)})p^n.
\end{equation*}
 Therefore $\tau(S_{n}^{[r]})^{p^{-n}}=\tau(\widetilde{s}_n^{(r)})$ and hence $(S_{n}^{[r]})^{p^{-n}}=\widetilde{s}_n^{(r)}$ for all $n\in \N_0$. That is,
   \begin{align}\label{6677}
  \widetilde{s}_n^{(r)}& =(S_n^{(r)})^{p^{-n}}(x_{01},\dots,x_{0r};x_{11}^{p},\dots,x_{1r}^{p};\dots;x_{n1}^{p^n},\dots,x_{nr}^{p^n}).
  \end{align}
 Since $R$ is perfect with characteristic $p$, we can put the power $p^{-n}$ inside, namely
 \begin{align*}
  \widetilde{s}_n^{(r)}& =S_n^{(r)}(x_{01}^{1/{p^n}},\dots,x_{0r}^{1/{p^n}};x_{11}^{1/{p^{n-1}}},\dots,x_{1r}^{1/{p^{n-1}}};\dots;x_{n1},\dots,x_{nr}).
  \end{align*}
 Note the degrees of variables in $\widetilde{s}_n^{(r)}$ are fractions. To apply the Ax-Katz theorem later, we need them to be integers. Let $s_n^{(r)}:=(\widetilde{s}_n^{(r)})^{p^n}$. Then by \eqref{6677}, we have
   \begin{align}\label{jjkkk}
  s_n^{(r)}& =S_n^{(r)}(x_{01},\dots,x_{0r};x_{11}^{p},\dots,x_{1r}^{p};\dots;x_{n1}^{p^n},\dots,x_{nr}^{p^n}).
  \end{align}
  It follows from Theorem \ref{wittpoly} that the polynomials $\widetilde{s}_n^{(r)}$ and  $s_n^{(r)}$ have integer coefficients and from Lemma \ref{homlem} that $s_n^{(r)}$ is homogeneous of degree $p^n$. The results for $\widetilde{m}_n^{(r)}$ and $m_n^{(r)}$ can be deduced similarly.
\end{proof}
\begin{remark}
   In the following we simply write $s_n^{(r)}$, which means by default it is in the variables $\{x_{ij}^{p^i} \mid i\in[0,n],j\in[1,r]\}$. As presented in \eqref{jjkkk} the order of $x_{ij}^{p^i}$ may affect the expression of $s_n^{(r)}$, but it does not affect the homogeneous degree of $s_n^{(r)}$, with which we are most concerned. So we may loosely write  $s_n^{(r)}=s_n^{(r)}(x_{ij}^{p^i} \mid i\in[0,n],j\in[1,r])$ when the variables are needed to be indicated.
\end{remark}

\begin{lemma}\label{zerolem}
  Let $R$ be a perfect ring with characteristic $p$. Let $m\in \N$ and $\sum_{i=0}^{\infty}\tau(x_i)p^i\in K_R$ with $x_i\in R$. Then the following statements are equivalent:
    \begin{enumerate}
    \item[\textup{(i)}]$\sum_{i=0}^{\infty}\tau(x_i)p^i\equiv 0 \pmod {p^m}$.
    \item[\textup{(ii)}]$x_0=x_1=\dots=x_{m-1}=0$.
    \item[\textup{(iii)}]$x_0=x_1^{p}=\dots=x_{m-1}^{p^{m-1}}=0$.
  \end{enumerate}
\end{lemma}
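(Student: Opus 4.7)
The plan is to split the three-way equivalence into two easy pieces: the equivalence of (ii) and (iii) is purely a statement about $R$, while the equivalence with (i) reduces to the uniqueness of the Teichm\"uller expansion in $K_R$ stated just before the lemma.

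First, I would dispose of (ii) $\Leftrightarrow$ (iii). Because $R$ is perfect of characteristic $p$, the Frobenius endomorphism $\phi:a\mapsto a^p$ is an automorphism of $R$ and in particular injective. Iterating, $a^{p^i}=0$ in $R$ is equivalent to $a=0$. Applying this to each $x_i$ with $0\le i\le m-1$ gives the equivalence.

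Next, for (ii) $\Leftrightarrow$ (i), I would argue both directions directly in $K_R$. If (ii) holds, then the Teichm\"uller expansion of $\alpha:=\sum_{i=0}^{\infty}\tau(x_i)p^i$ begins at index $m$, so
\[
\alpha \;=\; \sum_{i=m}^{\infty}\tau(x_i)p^i \;=\; p^m\sum_{j=0}^{\infty}\tau(x_{m+j})p^j \;\in\; p^mK_R,
\]
which gives (i). Conversely, assume (i), so $\alpha=p^m\beta$ for some $\beta\in K_R$. Take the (unique) Teichm\"uller expansion $\beta=\sum_{j=0}^{\infty}\tau(y_j)p^j$; then
\[
\alpha \;=\; p^m\beta \;=\; \sum_{j=0}^{\infty}\tau(y_j)p^{m+j} \;=\; \sum_{i=0}^{\infty}\tau(z_i)p^i,
\]
where $z_i=0$ for $i<m$ and $z_{m+j}=y_j$. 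Comparing this with $\alpha=\sum_{i=0}^{\infty}\tau(x_i)p^i$ and invoking the uniqueness of the Teichm\"uller expansion of an element of $K_R$ (explicitly stated in the paragraph before the lemma, coming from the fact that $\tau$ is a ring isomorphism) forces $x_i=z_i=0$ for $0\le i\le m-1$, which is (ii).

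There is really no serious obstacle here, since both ingredients — perfectness of $R$ and the uniqueness of the Teichm\"uller expansion — are already available. The only point worth being careful about is that one must invoke \emph{uniqueness}, not merely the existence, of the Teichm\"uller expansion; without it, (i) $\Rightarrow$ (ii) would not follow from a mere representation of $p^m\beta$.
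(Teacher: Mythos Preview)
Your proof is correct and follows essentially the same route as the paper's one-line argument, which simply appeals to the ring isomorphism $\tau: W(R)\to K_R$ (and $\tau((0,0,\dots))=0$); you have merely unpacked that appeal by working directly in $K_R$ with the uniqueness of the Teichm\"uller expansion rather than transporting the question to $W(R)$.
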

\begin{proof}
It follows directly from that $\tau((0,0,\dots))=0$ and that $\tau$ is a ring isomorphism between $W(R)$ and $K_R$.
\end{proof}

The above lemma can be easily extended to the $r$-fold addition, which will play the crucial role in our later proofs.
\begin{lemma}\label{keylem}
  Let $R$ be a perfect ring with characteristic $p$. Let $m,r\in \N$ and $\sum_{i=0}^{\infty}\tau(x_{ij})p^i\in K_R$ with $x_{ij}\in R, j\in [1,r]$. Suppose $\sum_{j=1}^{r}\left(\sum_{i=0}^{\infty}\tau(x_{ij})p^i\right)=\sum_{n=0}^{\infty}\tau(\widetilde{s}_n^{(r)})p^n$. For $n\in \N_0$, let $s_n^{(r)}:=(\widetilde{s}_n^{(r)})^{p^n}$. Then the following statements are equivalent:
    \begin{enumerate}
    \item[\textup{(i)}]$\sum_{j=1}^{r}\sum_{i=0}^{\infty}\tau(x_{ij})p^i\equiv 0 \pmod {p^m}$.
    \item[\textup{(ii)}]$\widetilde{s}_0^{(r)}=\widetilde{s}_1^{(r)}=\dots=\widetilde{s}_{m-1}^{(r)}=0$.
    \item[\textup{(iii)}]$s_0^{(r)}=s_1^{(r)}=\dots=s_{m-1}^{(r)}=0$.
    \end{enumerate}
\end{lemma}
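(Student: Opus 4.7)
The plan is to obtain this lemma as a nearly immediate consequence of Lemma \ref{zerolem} once we recognize that the left-hand sum already carries a well-defined Teichm\"uller expansion with coefficients $\widetilde{s}_n^{(r)}$. So the main task is to assemble the pieces, rather than do any new computation.

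First I would observe that by hypothesis the element
\[
\xi := \sum_{j=1}^{r}\sum_{i=0}^{\infty}\tau(x_{ij})p^i \in K_R
\]
equals $\sum_{n=0}^{\infty}\tau(\widetilde{s}_n^{(r)})p^n$, which is precisely the Teichm\"uller expansion of $\xi$ in $K_R$ (this is guaranteed by Theorem \ref{smm}, which identifies the $n$-th Teichm\"uller coordinate of an $r$-fold sum with $\widetilde{s}_n^{(r)}$, with the $\widetilde{s}_n^{(r)} \in R$ since $R$ is perfect). Applying Lemma \ref{zerolem} to $\xi$ with the coefficients $\widetilde{s}_n^{(r)}$ in place of the $x_i$ there, we immediately get the equivalence of (i) and the condition $\widetilde{s}_0^{(r)} = \widetilde{s}_1^{(r)} = \cdots = \widetilde{s}_{m-1}^{(r)} = 0$, which is (ii).

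Next, for (ii) $\Leftrightarrow$ (iii), I would use that $R$ is perfect of characteristic $p$, so Frobenius $a \mapsto a^p$ is an automorphism of $R$; in particular, for any $a \in R$ and any $n \in \N_0$, we have $a = 0$ if and only if $a^{p^n} = 0$. Applying this to $a = \widetilde{s}_n^{(r)}$ gives $\widetilde{s}_n^{(r)} = 0 \Leftrightarrow (\widetilde{s}_n^{(r)})^{p^n} = s_n^{(r)} = 0$, which yields (ii) $\Leftrightarrow$ (iii).

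There is essentially no obstacle here: both implications reduce to previously established facts (Lemma \ref{zerolem}, Theorem \ref{smm}, perfectness of $R$). The only thing to be slightly careful about is ensuring that the expansion $\sum_{n}\tau(\widetilde{s}_n^{(r)})p^n$ really is \emph{the} Teichm\"uller expansion of $\xi$ in $K_R$, i.e.\ that each $\widetilde{s}_n^{(r)}$ lies in $R$ (which requires perfectness to extract the $p^{-n}$-th roots implicit in Theorem \ref{smm}). Once that is in place, the lemma follows in a few lines.
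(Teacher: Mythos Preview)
Your proposal is correct and matches the paper's approach: the paper does not give an explicit proof but simply remarks that Lemma~\ref{zerolem} ``can be easily extended to the $r$-fold addition,'' which is exactly what you carry out by applying Lemma~\ref{zerolem} to the Teichm\"uller expansion $\xi=\sum_n \tau(\widetilde{s}_n^{(r)})p^n$ (identified via Theorem~\ref{smm}) and then using perfectness of $R$ for (ii)$\Leftrightarrow$(iii). Your write-up is in fact more detailed than the paper's treatment.
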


\section{$q$-Divisibility theorem for the Teichm\"{u}ller box}\label{sect3}

Let $p$ be a prime number and $q=p^h$ with $h\in \N$. In this section, we always let $R=\F_q$ and denote by $W(\F_q)$ the ring of Witt vectors over $\F_q$. Since we study the polynomials over the ring $\Z_q$, we introduce a bit more pertinent facts about $\Z_q$.

Let $\Q_p$ denote the field of $p$-adic rational numbers and $\Z_p$ the ring of integers in $\Q_p$. Let $\Q_q$ denote the unramified extension of $\Q_p$ of degree $h$ and $\Z_q$ the ring of integers in $\Q_q$. The residue field of $\Z_q$ is $\F_q$, i.e., $\Z_q/p\Z_p\cong \F_q$, and the quotient field of $\Z_q$ is $\Q_q$. Suppose that $\mu_{q-1}$ is a primitive $(q-1)$-th root of unity in $\Q_q$. Then $\Q_q=\Q_p[\mu_{q-1}]$ and $\Z_q=\Z_p[\mu_{q-1}]$.

Let $T_q$ be the set of Teichm\"{u}ller representatives of $\mathbb{F}_{q}$ in $\mathbb{Z}_q$ and the related Teichm\"{u}ller lifting be $\tau:\mathbb{F}_{q}\rightarrow \Z_q, a\mapsto \tau(a)$. Then $T_q=\{\tau(a) | a\in \F_q\}$ and for each $a\in \F_q$, we have $\tau(a)^q=\tau(a)$ and $\tau(a)\equiv a \pmod{p}$. Then $T_q=\{\mu_{q-1}^i | i=1,2,\dots,q-1\}\cup \{0\}$. For any $a\in T_q$, let $\widetilde{a}$ be the unique element in $\F_q$ such that $\tau(\widetilde{a})=a$. We call $T_q^n$ the Teichm\"{u}ller box in $\Z_q^n$.

Another construction of $\Z_q$ is using the ring $W(\F_q)$, the Witt vectors over $\F_q$, as described in Section \ref{sect2}. The ring isomorphism between $W(\F_q)$ and $\Z_q$ is given by
\begin{equation}\label{zqmap}
  \tau:W(\F_q) \rightarrow \Z_q,\quad  (a_0,a_1^p,a_2^{p^2},\dots)\mapsto \tau(a_0)+\tau(a_1)p+\tau(a_2)p^2+\cdots
\end{equation}
If $\F_q=\F_p$, then $W(\F_p)\cong\Z_p$. Moreover, since $a^p=a$ for $a\in\F_p$, \eqref{zqmap} becomes
\begin{equation}\label{zpmap}
  \tau:W(\F_p) \rightarrow \Z_p,\quad  (a_0,a_1,a_2,\dots)\mapsto \tau(a_0)+\tau(a_1)p+\tau(a_2)p^2+\cdots
\end{equation}

Let $\mathbb{Z}_q[x_1,\ldots,x_n]$ denote the ring of polynomials in $n$ variables $x_1,\ldots,x_n$ with coefficients in $\mathbb{Z}_q$. Write $X^u=x_1^{d_1}\cdots x_n^{d_n}$ with $u=(d_1,\dots,d_n)\in \N_0^n$. Let $f=\sum_{j=1}^{r}a_jX^{u_j}\in \mathbb{Z}_q[x_1,\ldots,x_n]$ with $0\neq a_j\in \mathbb{Z}_q$.
We can write
$$a_j=\sum_{i=0}^{\infty}a_{ij}p^i, \ \ a_{ij}\in T_q.$$
This is called the Teichm\"uller expansion of $a_j$. Similarly,
$$f=\sum_{j=1}^{r}\sum_{i=0}^{\infty}a_{ij}p^iX^{u_j}=\sum_{j=1}^{r}\sum_{i=0}^{\infty}(a_{ij}X^{u_j})p^i
=\sum_{i=0}^{\infty}p^i \sum_{j=1}^{r}a_{ij}X^{u_j}$$
is the Teichmuller expansion of the polynomial $f$. We first consider the single polynomial case.

\subsection{For single polynomial}

A polynomial $f\in\Z_q[x_1,\dots, x_n]$ is called a Teichm\"uller polynomial if all of its coefficients are
Teichm\"uller elements in $T_q$. Clearly, any polynomial $f\in\Z_q[x_1,\dots, x_n]$ has the unique expansion
$$f(X) = \sum_{i=0}^{\infty} p^i f_i(X),$$
where each $f_i(X)$ is a Teichm\"uller polynomial. This is called the Teichm\"uller expansion of $f$.
It is obtained from the Teichm\"uller expansion of the coefficients of $f$.

\begin{theorem}[Strong Version]\label{thm1}
Let $p$ be a prime number and $q=p^h$ with $h\in \N$. Let $f\in \mathbb{Z}_q[x_1,\ldots,x_n]$ be a nonzero polynomial. Given an $m\in \N$, let
\begin{equation*}
  V:=\{X\in T_q^n \mid  f(X)\equiv 0 \pmod {p^{m}}\}.
\end{equation*}
Let $f=\sum_{i=0}^{\infty} p^i f_i$ be the Teichm\"uller expansion of $f$. Let $d\in \N$. If $\deg (f_i)\leq dp^{h\lfloor\frac{i}{h}\rfloor}$ for all $i\in [0,m-1]$, then
\begin{equation}\label{hhgg1}
\mathrm{ord}_q(|V|)\geq \left\lceil\frac{n-\frac{p^{m}-1}{p-1}d}{p^{m-1}d}\right\rceil^*.
\end{equation}
\end{theorem}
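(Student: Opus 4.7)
The plan is to translate the congruence $f(X)\equiv 0\pmod{p^m}$ on $T_q^n$ into a system of polynomial equations over $\F_q$, then invoke the Ax-Katz theorem. I would first write $f=\sum_{j=1}^{r}a_jX^{u_j}$ with $a_j\in\Z_q\setminus\{0\}$ and Teichm\"uller-expand the coefficients as $a_j=\sum_{i\geq 0}\tau(b_{ji})p^i$ with $b_{ji}\in\F_q$; this is consistent with $f=\sum_{i\geq 0}p^if_i$ via $f_i=\sum_j\tau(b_{ji})X^{u_j}$. For $X\in T_q^n$ with reduction $\tilde X\in\F_q^n$, multiplicativity of the Teichm\"uller lift gives $a_jX^{u_j}=\sum_{i\geq 0}\tau(b_{ji}\tilde X^{u_j})p^i$, hence
\[f(X)=\sum_{j=1}^{r}\sum_{i=0}^{\infty}\tau(b_{ji}\tilde X^{u_j})\,p^i.\]
Applying Lemma \ref{keylem} to this $r$-fold sum of Teichm\"uller expansions in $\Z_q$, the condition $f(X)\equiv 0\pmod{p^m}$ becomes the system $s_0^{(r)}=s_1^{(r)}=\cdots=s_{m-1}^{(r)}=0$ in $\F_q$, where each $s_k^{(r)}$ is evaluated at $y_{ij}=b_{ji}\tilde X^{u_j}$. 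Via the bijection $X\mapsto\tilde X$, the set $V$ is in one-to-one correspondence with the common zero set in $\F_q^n$ of these $m$ equations in the $n$ variables $\tilde X$.

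The decisive step is to bound the $\tilde X$-degree of each $s_k^{(r)}(\{b_{ji}\tilde X^{u_j}\})$ by $dp^k$. By Theorem \ref{smm} and Lemma \ref{homlem}, $s_k^{(r)}$ is a sum of monomials $\prod_\ell y_{i_\ell j_\ell}^{e_\ell p^{i_\ell}}$ with $\sum_\ell e_\ell p^{i_\ell}=p^k$, so after the substitution each monomial becomes
\[\prod_\ell b_{j_\ell i_\ell}^{e_\ell p^{i_\ell}}\,\tilde X^{\sum_\ell e_\ell p^{i_\ell}u_{j_\ell}},\]
whose naive $\tilde X$-degree $\sum_\ell e_\ell p^{i_\ell}|u_{j_\ell}|$ can be as large as $dp^{k+h\lfloor k/h\rfloor}$, which is off by a factor $p^{h\lfloor k/h\rfloor}$. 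The hard part is absorbing this factor using the Frobenius identity $\tilde X^q=\tilde X$ on $\F_q^n$. I would factor $p^{i_\ell}=q^{\lfloor i_\ell/h\rfloor}p^{i_\ell\bmod h}$ and replace each factor $\tilde X^{e_\ell p^{i_\ell}u_{j_\ell}}$ by $\tilde X^{e_\ell p^{i_\ell\bmod h}u_{j_\ell}}$; these agree as functions on $\F_q^n$ because $q^{a}v\equiv v\pmod{q-1}$ componentwise, so $\tilde X^{q^{a}v}=\tilde X^{v}$ for every integer vector $v$ and every $a\geq 0$. The resulting polynomial representative of the monomial has $\tilde X$-degree
\[\sum_\ell e_\ell p^{i_\ell\bmod h}|u_{j_\ell}|\;\leq\;d\sum_\ell e_\ell p^{\,i_\ell\bmod h+h\lfloor i_\ell/h\rfloor}\;=\;d\sum_\ell e_\ell p^{i_\ell}\;=\;dp^k,\]
where the inequality uses the hypothesis $|u_{j_\ell}|\leq\deg(f_{i_\ell})\leq dp^{h\lfloor i_\ell/h\rfloor}$. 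Summing over monomials, each $s_k^{(r)}(\{b_{ji}\tilde X^{u_j}\})$ agrees on $\F_q^n$ with a polynomial of total degree at most $dp^k$.

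To finish, I would discard any equations that reduce to the zero polynomial (which can only sharpen the bound) and apply the Ax-Katz theorem (Theorem \ref{axkatzthm}) to the remaining system of at most $m$ polynomials over $\F_q$ of degrees bounded by $d,dp,\dots,dp^{m-1}$. Since $\sum_{k=0}^{m-1}dp^k=\frac{p^m-1}{p-1}d$ and $\max_k dp^k=dp^{m-1}$, Ax-Katz delivers exactly
\[\mathrm{ord}_q(|V|)\;\geq\;\left\lceil\frac{n-\frac{p^m-1}{p-1}d}{dp^{m-1}}\right\rceil^{*}.\]
The conceptual point is that the step-function hypothesis $\deg(f_i)\leq dp^{h\lfloor i/h\rfloor}$ is calibrated to the Frobenius structure of $\F_q$: the factor $q^{\lfloor i_\ell/h\rfloor}=p^{h\lfloor i_\ell/h\rfloor}$ that one peels off using $\tilde X^q=\tilde X$ exactly cancels the extraneous $p^{h\lfloor k/h\rfloor}$ in the naive degree bound, yielding the desired $dp^k$.
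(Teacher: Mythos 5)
Your proposal is correct and follows essentially the same route as the paper's proof: rewrite $f(X)$ on $T_q^n$ as an $r$-fold sum of Teichm\"uller series, invoke Lemma \ref{keylem} to pass to the system $s_0^{(r)}=\cdots=s_{m-1}^{(r)}=0$ over $\F_q$, use the Frobenius identity $x^q=x$ together with the step-function hypothesis $\deg(f_i)\leq dp^{h\lfloor i/h\rfloor}$ to force $\deg g_k\leq dp^k$, and then apply Ax--Katz. The only cosmetic difference is that you carry out the degree bound monomial by monomial, whereas the paper performs the Frobenius reduction on each substituted factor and then cites the weighted-homogeneity clause of Lemma \ref{homlem}; these are the same computation.
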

\begin{proof}
  Let $s_n^{(r)}$ be the polynomial as defined before, which is homogeneous of degree $p^n$ by Theorem \ref{smm}. Write
  $$f= \sum_{i=0}^{\infty} p^i f_i= \sum_{i=0}^{\infty}p^i\sum_{j=1}^{r}a_{ij}X^{u_j}, \ a_{ij}\in T_q.$$
  From Lemma \ref{keylem} we know that for a given $X\in T_q^n$, $f(X)\equiv 0 \pmod {p^m}$ if and only if
  $$g_k(X):=s_k^{(r)}\left((\widetilde{a}_{ij}\widetilde{X}^{u_j})^{p^i}\mid i\in[0,k], j\in [1,r]\right)=0,  \mbox{ for all } k\in [0,m-1].$$
  Note $\widetilde{a}_{ij}, \widetilde{X}\in \F_q$ with $X\in T_q$. Define
  \begin{equation*}
  \widetilde{V}:=\left\{X\in \F_q^n \mid  g_k(X):=s_k^{(r)}\left((\widetilde{a}_{ij}X^{u_j})^{p^i}\mid i\in[0,k], j\in [1,r]\right)= 0 \mbox{ for all } k\in [0,m-1]\right\}.
  \end{equation*}
   Then $|V|=|\widetilde{V}|$. Note $(\widetilde{a}_{ij}X^{u_j})^{p^i}=(\widetilde{a}_{ij}X^{u_j})^{p^{i-h\lfloor\frac{i}{h}\rfloor}}$ in $\F_q$ with $q=p^h$. Now
   $$\deg (a_{ij}X^{u_j}) \leq \deg(f_i) \leq dp^{h\lfloor\frac{i}{h}\rfloor}.$$
   It follows that
   $$\deg((\widetilde{a}_{ij}X^{u_j})^{p^{i-h\lfloor\frac{i}{h}\rfloor}})\leq dp^{h\lfloor\frac{i}{h}\rfloor} p^{i-h\lfloor\frac{i}{h}\rfloor}= dp^i.$$
   So by Lemma \ref{homlem},  $\deg(g_k)\leq d p^k$ for $k\in [0,m-1]$. Thus
\begin{equation}\label{ddee}
    \sum\nolimits_{k=0}^{m-1}\deg(g_k)\leq \sum\nolimits_{k=0}^{m-1}p^kd=\frac{p^m-1}{p-1}d.
\end{equation}
  Applying the Ax-Katz theorem \ref{axkatzthm} to $\widetilde{V}$ and using \eqref{ddee}, we obtain
  \begin{equation}\label{lleq}
  \mathrm{ord}_q(|\widetilde{V}|)\geq \left\lceil\frac{n-\sum_{k=0}^{m-1}\deg(g_k)}{\max\nolimits_{k\in [0,m-1]}\deg(g_k)}\right\rceil^*\geq \left\lceil\frac{n-\frac{p^{m}-1}{p-1}d}{p^{m-1}d}\right\rceil^*.
  \end{equation}
  Then \eqref{hhgg1} follows from \eqref{lleq} and the equality that $|V|=|\widetilde{V}|$.
\end{proof}

If $\deg(f) =d$, then $\deg (f_i)\leq d \leq dp^{h\lfloor\frac{i}{h}\rfloor}$ for all $i$ and thus the condition of the theorem
is automatically satisfied. This gives the following weaker but simpler consequence.

\begin{corollary}[Weak Version]\label{cor1}
Let $p$ be a prime number and $q=p^h$ with $h\in \N$. Let $f\in \mathbb{Z}_q[x_1,\ldots,x_n]$ be a nonzero polynomial. Given an $m\in \N$, let
\begin{equation*}
  V:=\{X\in T_q^n \mid  f(X)\equiv 0 \pmod {p^{m}}\}.
\end{equation*}
Then
\begin{equation*}
\mathrm{ord}_q(|V|)\geq \left\lceil\frac{n-\frac{p^{m}-1}{p-1}\deg (f)}{p^{m-1}\deg (f)}\right\rceil^*.
\end{equation*}
\end{corollary}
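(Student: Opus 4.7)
The plan is to derive Corollary \ref{cor1} as an immediate specialization of Theorem \ref{thm1} with parameter $d=\deg(f)$. The only substantive task is to check that the degree hypothesis $\deg(f_i)\le d\,p^{h\lfloor i/h\rfloor}$ of the strong version is automatic in this uniform setting.

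First I would recall how the Teichm\"uller expansion $f=\sum_{i=0}^\infty p^i f_i$ is constructed. Writing $f=\sum_{j=1}^r a_j X^{u_j}$ with $0\ne a_j\in\Z_q$ and expanding each coefficient as $a_j=\sum_{i=0}^\infty a_{ij}p^i$ with $a_{ij}\in T_q$, the Teichm\"uller polynomial $f_i$ is obtained by regrouping, yielding $f_i=\sum_{j=1}^r a_{ij}X^{u_j}$. In particular the monomial support of every $f_i$ sits inside that of $f$, so $\deg(f_i)\le \deg(f)=d$ for every $i\ge 0$. Since $h\lfloor i/h\rfloor\ge 0$ always gives $p^{h\lfloor i/h\rfloor}\ge 1$, we obtain $\deg(f_i)\le d\le d\,p^{h\lfloor i/h\rfloor}$ for every $i\in[0,m-1]$, which is exactly the hypothesis of Theorem \ref{thm1}.

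Applying Theorem \ref{thm1} with this value of $d$ then yields the bound \eqref{hhgg1}, which is precisely the statement of Corollary \ref{cor1}. There is no real obstacle here: the weak version is simply the strong version in the uniform-degree regime, and the only conceptual point --- that the Teichm\"uller expansion never inflates degrees --- is transparent from its coefficientwise definition. All the genuine work (controlling the degrees of the polynomials $g_k$ arising from the Witt addition polynomials $s_k^{(r)}$ and then invoking the Ax-Katz theorem) has already been carried out in the proof of Theorem \ref{thm1}.
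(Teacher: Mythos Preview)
Your proposal is correct and follows essentially the same approach as the paper: you set $d=\deg(f)$, observe that $\deg(f_i)\le d\le d\,p^{h\lfloor i/h\rfloor}$ since the Teichm\"uller expansion is coefficientwise and hence cannot enlarge the monomial support, and then invoke Theorem \ref{thm1}. The paper states exactly this argument in the sentence preceding the corollary; your version simply spells out the support-containment reason a bit more explicitly.
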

Corollary \ref{cor1}, though in which the condition is weaker than Theorem \ref{thm1}, can be viewed as the generalized $\Z_q$-version of Theorem \ref{grythm} with $s=1$ for the Teichm\"{u}ller box case. In other words, Theorem \ref{thm1} not only generalizes but also improves Theorem \ref{grythm} for one polynomial in the Teichm\"{u}ller box case.

Corollary \ref{cor2} below follows from Theorem \ref{thm1} and the fact that $h\lfloor\frac{i}{h}\rfloor=i$ for $h=1$.
\begin{corollary}\label{cor2}
Let $p$ be a prime number. Let $f\in \mathbb{Z}_p[x_1,\ldots,x_n]$ be a nonzero polynomial. Given an $m\in \N$, let
\begin{equation*}
  V:=\{X\in T_p^n \mid  f(X)\equiv 0 \pmod {p^{m}}\}.
\end{equation*}
Let $f=\sum_{i=0}^{\infty} p^i f_i$ be the Teichm\"uller expansion of $f$. Let $d\in \N$. If $\deg (f_i)\leq dp^i$ for all $i\in [0,m-1]$, then
\begin{equation*}
\mathrm{ord}_p(|V|)\geq \left\lceil\frac{n-\frac{p^{m}-1}{p-1}d}{p^{m-1}d}\right\rceil^*.
\end{equation*}
\end{corollary}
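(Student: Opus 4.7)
The plan is to derive Corollary~\ref{cor2} as an immediate specialization of Theorem~\ref{thm1} to the case $h=1$. The chain of reductions is essentially syntactic, hinging on the identity $h\lfloor i/h\rfloor = i$ which holds precisely when $h=1$.

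First, I would set $h=1$, so that $q=p^{h}=p$. Under this specialization the ring $\Z_q$ becomes $\Z_p$, the Teichm\"uller box $T_q^n$ becomes $T_p^n$, and the normalized valuation $\mathrm{ord}_q$ coincides with $\mathrm{ord}_p$. In particular, the solution set $V$ appearing in the statement of Corollary~\ref{cor2} is literally the set $V$ defined in Theorem~\ref{thm1} once we plug in $(p,f,m,d)$, so the two hypotheses describe the same combinatorial quantity.

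Next, I would verify that the degree hypothesis translates correctly. Theorem~\ref{thm1} requires $\deg(f_i)\leq d p^{h\lfloor i/h\rfloor}$ for every $i\in[0,m-1]$. When $h=1$ the exponent collapses to $1\cdot\lfloor i/1\rfloor = i$, so the requirement reads $\deg(f_i)\leq dp^i$, which is exactly the hypothesis imposed in Corollary~\ref{cor2}. All hypotheses of Theorem~\ref{thm1} are therefore met, and invoking it yields
\begin{equation*}
\mathrm{ord}_p(|V|)=\mathrm{ord}_q(|V|)\geq \left\lceil\frac{n-\frac{p^{m}-1}{p-1}d}{p^{m-1}d}\right\rceil^{*},
\end{equation*}
which is the inequality to be established.

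There is no substantive obstacle in this argument: the Witt-vector encoding of the congruence $f\equiv 0\pmod{p^m}$, the homogeneity degree bounds from Lemma~\ref{homlem}, and the appeal to the Ax--Katz theorem over $\F_q$ have all already been absorbed into Theorem~\ref{thm1}. The only item worth double-checking is that no other parameter in Theorem~\ref{thm1} hides a dependence on $h$ that would complicate the substitution $h=1$; a brief inspection confirms that the remaining quantities ($n$, $d$, $m$, $p$, and the expansion $f=\sum p^i f_i$) are all intrinsic to the data of Corollary~\ref{cor2}. Hence the corollary follows at once.
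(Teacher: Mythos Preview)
Your proposal is correct and matches the paper's own argument exactly: the paper states that Corollary~\ref{cor2} ``follows from Theorem~\ref{thm1} and the fact that $h\lfloor\frac{i}{h}\rfloor=i$ for $h=1$,'' which is precisely the specialization you carry out.
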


Note that the degree condition $\deg (f_i)\leq dp^{i}$ is significantly weaker than the condition $\deg(f) \leq d$, which allows those terms of $f$ that are divisible by $p$ have much larger degree than $d$.

\subsection{For polynomial system}

Theorem \ref{thm1} can be extended to the system of polynomials without much more difficulties except for more cumbersome notation. Theorem \ref{thm2} below generalizes as well as improves Theorem \ref{grythm} for the Teichm\"{u}ller box case.

\begin{theorem}[Strong Version]\label{thm2}
Let $p$ be a prime number and $q=p^h$ with $h\in \N$. Let $f_1,\dots, f_s\in \mathbb{Z}_q[x_1,\ldots,x_n]$ be a system of nonzero polynomials. For given $m_1,\dots,m_s\in \N$, let
\begin{equation*}
  V:=\{X\in T_q^n \mid  f_k(X)\equiv 0 \pmod {p^{m_k}} \mbox{ for all } k\in [1,s]\}.
\end{equation*}
Write the $p$-adic Teichm\"uller expansion
$$f_k=\sum_{i=0}^{\infty} p^if_{k,i}(X),  \ k\in[1,s].$$
Let $d_1,\dots,d_s\in \N$. If $\deg (f_{k,i})\leq d_kp^{h\lfloor\frac{i}{h}\rfloor}$ for all $i\in [0,m_k-1]$, $k\in[1,s]$, then
\begin{equation*}
\mathrm{ord}_q(|V|)\geq \left\lceil\frac{n-\sum_{k=1}^s\frac{p^{m_k}-1}{p-1}d_k}{\max\nolimits_{k \in [1,s]}\{p^{m_k-1}d_k\}}\right\rceil^*.
\end{equation*}
\end{theorem}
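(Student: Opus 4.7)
The plan is to mimic the single–polynomial argument from Theorem \ref{thm1}, but now processing each of the $s$ congruences in parallel and then feeding the resulting (much larger) system of polynomial equations over $\mathbb{F}_q$ into the classical Ax-Katz theorem. For each $k\in [1,s]$, write $f_k=\sum_{j=1}^{r_k}a_{k,j}X^{u_{k,j}}$ (absorbing the $p$-adic Teichm\"uller expansion into the sum so that each $a_{k,j}\in T_q$), and apply Lemma \ref{keylem} to the $r_k$-fold sum giving $f_k(X)$. For $X\in T_q^n$ with reduction $\widetilde{X}\in\mathbb{F}_q^n$, this translates the congruence $f_k(X)\equiv 0\pmod{p^{m_k}}$ into the vanishing over $\mathbb{F}_q$ of the $m_k$ polynomials
\begin{equation*}
 g_{k,\ell}(\widetilde{X}) := s_\ell^{(r_k)}\!\left((\widetilde{a}_{k,j,i}\widetilde{X}^{u_{k,j}})^{p^i}\,\middle|\, i\in[0,\ell],\, j\in[1,r_k]\right)=0,\quad \ell\in[0,m_k-1].
\end{equation*}
Hence the set $V$ is in bijection with the common $\mathbb{F}_q$-zero set $\widetilde V$ of the combined system $\{g_{k,\ell}\}_{k\in[1,s],\,\ell\in[0,m_k-1]}$ of $\sum_{k=1}^s m_k$ polynomials.

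The next step is the degree bookkeeping, which is where the hypothesis $\deg(f_{k,i})\le d_k p^{h\lfloor i/h\rfloor}$ enters. Exactly as in the proof of Theorem \ref{thm1}, the identity $y^{p^i}=y^{p^{i-h\lfloor i/h\rfloor}}$ in $\mathbb{F}_q$ together with the hypothesis yields
\begin{equation*}
 \deg\!\left((\widetilde{a}_{k,j,i}\widetilde X^{u_{k,j}})^{p^i}\right)\le d_k p^{h\lfloor i/h\rfloor}\cdot p^{i-h\lfloor i/h\rfloor}=d_k p^i.
\end{equation*}
Since $s_\ell^{(r_k)}$ is homogeneous of degree $p^\ell$ in variables of weight $1$ (Theorem \ref{smm}), declaring $\mathrm{wt}((\widetilde{a}_{k,j,i}\widetilde X^{u_{k,j}})^{p^i})\le d_k p^i$ and invoking the weighted-homogeneity bound in Lemma \ref{homlem} gives
\begin{equation*}
 \deg(g_{k,\ell})\le d_k p^\ell\qquad\text{for all }k\in[1,s],\ \ell\in[0,m_k-1].
\end{equation*}

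Finally, apply the Ax-Katz theorem (Theorem \ref{axkatzthm}) to $\widetilde V$ over $\mathbb{F}_q$. The sum of the degrees is
\begin{equation*}
 \sum_{k=1}^s\sum_{\ell=0}^{m_k-1}\deg(g_{k,\ell})\le \sum_{k=1}^s d_k\sum_{\ell=0}^{m_k-1}p^\ell=\sum_{k=1}^s \frac{p^{m_k}-1}{p-1}d_k,
\end{equation*}
and the maximum degree is at most $\max_{k\in[1,s]}\{p^{m_k-1}d_k\}$. Combining these with $|V|=|\widetilde V|$ produces exactly the asserted lower bound on $\mathrm{ord}_q(|V|)$.

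The conceptual content is entirely covered by Lemma \ref{keylem} plus the weighted-degree lemma, so no new obstacle appears relative to the single-polynomial case; the only thing to watch is that one must apply Lemma \ref{keylem} \emph{separately} to each $f_k$ (each with its own $r_k$ and its own family of Witt-sum polynomials $s_\ell^{(r_k)}$) and then combine the resulting equations into a single Ax-Katz input. The mild subtlety is ensuring that the degree estimate $\deg(g_{k,\ell})\le d_k p^\ell$ really uses only the terms $f_{k,i}$ with $i\le m_k-1$ (terms with $i\ge m_k$ contribute nothing modulo $p^{m_k}$), which is precisely why the hypothesis need only constrain $\deg(f_{k,i})$ for $i\in[0,m_k-1]$.
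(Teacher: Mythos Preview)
Your proposal is correct and follows essentially the same approach as the paper: apply the construction of Theorem~\ref{thm1} separately to each $f_k$ to obtain $m_k$ polynomials over $\mathbb{F}_q$ of degree at most $d_kp^\ell$, then feed the combined system of $\sum_k m_k$ equations into Ax--Katz. The only quibble is a notational slip---you first absorb the Teichm\"uller expansion so that coefficients are indexed only by $(k,j)$, but then write $\widetilde a_{k,j,i}$; you need to keep the index $i$ (the $p$-power level) throughout, exactly as in the proof of Theorem~\ref{thm1}.
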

\begin{proof}
   From the proof of Theorem \ref{thm1}, we see that for each modulus $p^{m_k}$, the polynomial $f_k$ contributes $m_k$ polynomials $g_{tk}$ over $\F_q$ ($t\in [0,{m_k}-1]$)  whose degree is bounded by $p^{t}d_k$ and thus
  $\sum_{t=0}^{{m_k}-1}\deg(g_{tk})\leq \frac{p^{m_k}-1}{p-1}d_k$. Now given $s$ polynomials $f_k$ and $s$ moduli $p^{m_k}$, $k\in[1,s]$, we get $\sum_{k=1}^{s}m_k$ polynomials over $\F_q$ with the sum of degrees $\leq \sum_{k=1}^{s}\frac{p^{m_k}-1}{p-1}d_k$ and maximal degree bounded by $\max\nolimits_{k \in [1,s]}\{p^{m_k-1}d_k\}$. Applying the Ax-Katz theorem, we obtain the desired result.
\end{proof}

Let $d_k =\deg(f_k)$. Then trivially we have $\deg (f_{k,i}) \leq d_k \leq d_kp^{h\lfloor\frac{i}{h}\rfloor}$.
This gives the following weaker corollary.

\begin{corollary}[Weak Version]\label{cor3}
Let $p$ be a prime number and $q=p^h$ with $h\in \N$. Let $f_1,\dots, f_s\in \mathbb{Z}_q[x_1,\ldots,x_n]$ be a system of nonzero polynomials. For given $m_1,\dots,m_s\in \N$, let
\begin{equation*}
  V:=\{X\in T_q^n \mid  f_k(X)\equiv 0 \pmod {p^{m_k}} \mbox{ for all } k\in [1,s]\}.
\end{equation*}
Then
\begin{equation*}
\mathrm{ord}_q(|V|)\geq \left\lceil\frac{n-\sum_{k=1}^s\frac{p^{m_k}-1}{p-1}\deg (f_k)}{\max\nolimits_{k \in [1,s]}\{p^{m_k-1}\deg (f_k)\}}\right\rceil^*.
\end{equation*}
\end{corollary}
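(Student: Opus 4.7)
The plan is to deduce Corollary \ref{cor3} directly from its strong counterpart Theorem \ref{thm2} by specializing $d_k := \deg(f_k)$ for each $k\in[1,s]$. The only thing requiring verification is that this choice satisfies the degree hypothesis of Theorem \ref{thm2}, namely that $\deg(f_{k,i}) \leq d_k p^{h\lfloor i/h\rfloor}$ for all $i\in [0,m_k-1]$.

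To see that the hypothesis holds, recall that the Teichm\"uller expansion $f_k=\sum_{i=0}^\infty p^i f_{k,i}$ is obtained coefficient by coefficient: if $f_k=\sum_j a_j X^{u_j}$ and each coefficient expands as $a_j=\sum_{i=0}^\infty a_{ij}p^i$ with $a_{ij}\in T_q$, then $f_{k,i}=\sum_j a_{ij}X^{u_j}$. Thus the monomial support of each $f_{k,i}$ is contained in the monomial support of $f_k$, giving the elementary degree bound
\[
\deg(f_{k,i})\leq \deg(f_k)=d_k.
\]
Since $p^{h\lfloor i/h\rfloor}\geq 1$ for every $i\geq 0$, this trivially yields $\deg(f_{k,i})\leq d_k\leq d_k\,p^{h\lfloor i/h\rfloor}$, which is exactly the hypothesis of Theorem \ref{thm2}.

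With the hypothesis verified, Theorem \ref{thm2} applies and its conclusion becomes
\[
\mathrm{ord}_q(|V|)\geq \left\lceil\frac{n-\sum_{k=1}^s\frac{p^{m_k}-1}{p-1}\deg(f_k)}{\max\nolimits_{k\in[1,s]}\{p^{m_k-1}\deg(f_k)\}}\right\rceil^*,
\]
which is precisely the statement of the corollary. There is no genuine obstacle here, since the corollary is a free specialization: the only conceptual point is the (immediate) observation that the $p$-adic Teichm\"uller expansion, being a coefficient-wise operation, cannot enlarge the monomial support and hence cannot increase the total degree of the polynomial. An alternative but essentially identical route would be to rerun the proof of Theorem \ref{thm2} from Lemma \ref{keylem} and Theorem \ref{smm}, uniformly replacing the individual bounds on $\deg(f_{k,i})$ by the single bound $\deg(f_k)$; this yields the same count on the sum and maximum of degrees of the resulting $\F_q$-polynomials, and a final application of the Ax-Katz theorem (Theorem \ref{axkatzthm}) produces the corollary directly.
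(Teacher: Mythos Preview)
Your proof is correct and follows essentially the same approach as the paper: set $d_k=\deg(f_k)$, note that $\deg(f_{k,i})\leq d_k\leq d_k\,p^{h\lfloor i/h\rfloor}$ trivially, and invoke Theorem~\ref{thm2}. The paper records this in a single line just before the corollary, while you spell out why the Teichm\"uller expansion cannot enlarge the monomial support, but the argument is the same.
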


In the case $q=p$, the theorem becomes

\begin{corollary}\label{cor4}
 Let $p$ be a prime number. Let $f_1,\dots, f_s\in \mathbb{Z}_p[x_1,\ldots,x_n]$ be a system of nonzero polynomials. For given $m_1,\dots,m_s\in \N$, let
\begin{equation*}
  V:=\{X\in T_p^n \mid  f_k(X)\equiv 0 \pmod {p^{m_k}} \mbox{ for all } k\in [1,s]\}.
\end{equation*}
For each $k\in[1,s]$, write the $p$-adic Teichm\"uller expansion
$$f_k=\sum_{i=0}^{\infty} p^if_{k,i}(X).$$
Let $d_1,\dots,d_s\in \N$. If $\deg (f_{k,i})\leq d_kp^i$ for all $i\in [0,m_k-1]$, $k\in[1,s]$, then
\begin{equation*}
\mathrm{ord}_p(|V|)\geq \left\lceil\frac{n-\sum_{k=1}^s\frac{p^{m_k}-1}{p-1}d_k}{\max\nolimits_{k \in [1,s]}\{p^{m_k-1}d_k\}}\right\rceil^*.
\end{equation*}
\end{corollary}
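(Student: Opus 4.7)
The plan is to obtain Corollary \ref{cor4} as the immediate $h=1$ (equivalently, $q=p$) case of the previously established Theorem \ref{thm2}. The only thing to verify is that every ingredient of the strong statement specializes cleanly. Setting $h=1$ collapses the floor expression via $h\lfloor i/h\rfloor=\lfloor i\rfloor=i$, so the hypothesis $\deg(f_{k,i})\leq d_kp^{h\lfloor i/h\rfloor}$ of Theorem \ref{thm2} becomes exactly $\deg(f_{k,i})\leq d_kp^{i}$, which is the hypothesis assumed here. The Teichm\"uller box $T_q^n$ specializes to $T_p^n$, the valuation $\mathrm{ord}_q$ equals $\mathrm{ord}_p$, and the lower bound in Theorem \ref{thm2} is already expressed purely in terms of $p$, the $m_k$, and the $d_k$, so it transfers verbatim to give the claimed estimate.

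For a self-contained derivation, I would replay the argument of Theorems \ref{thm1} and \ref{thm2} with $q$ replaced by $p$ throughout. Expanding $f_k=\sum_{i\geq0}p^i\sum_{j=1}^{r_k}a_{k,i,j}X^{u_{k,j}}$ with $a_{k,i,j}\in T_p$, Lemma \ref{keylem} converts each congruence $f_k(X)\equiv 0\pmod{p^{m_k}}$ into a system of $\F_p$-equations $g_{t,k}(X):=s_t^{(r_k)}\bigl((\widetilde a_{k,i,j}\widetilde X^{u_{k,j}})^{p^i}\bigr)=0$ for $t\in[0,m_k-1]$; since $a^p=a$ in $\F_p$, the $p^i$-th powers simplify, and each $\widetilde a_{k,i,j}\widetilde X^{u_{k,j}}$ has $X$-degree $\leq \deg f_{k,i}\leq d_kp^i$. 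The weighted-homogeneity output of Lemma \ref{homlem} (with $\mathrm{wt}(x_{ij})=p^i$) together with Theorem \ref{smm} then yields $\deg(g_{t,k})\leq d_kp^t$, hence $\sum_t\deg(g_{t,k})\leq \tfrac{p^{m_k}-1}{p-1}d_k$. Pooling over $k\in[1,s]$, the combined system of $\sum_km_k$ equations over $\F_p$ has total degree sum bounded by $\sum_k\tfrac{p^{m_k}-1}{p-1}d_k$ and maximum single-equation degree bounded by $\max_kp^{m_k-1}d_k$. Applying the Ax-Katz theorem (Theorem \ref{axkatzthm}) to this system, together with the bijection $V\leftrightarrow\widetilde V$, delivers the desired lower bound on $\mathrm{ord}_p|V|$.

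There is no genuine obstacle, because the statement is pure specialization. The one subtlety worth flagging is that for $h=1$ the Frobenius is the identity on $\F_p$, so the powers $p^i$ inside the substitutions to $s_t^{(r_k)}$ act trivially; this simplifies the computation but does not alter the degree bookkeeping from Lemma \ref{homlem} and Theorem \ref{smm}, and the bound $\deg(g_{t,k})\leq d_kp^t$ still drives the final Ax-Katz input exactly as in the proof of Theorem \ref{thm2}.
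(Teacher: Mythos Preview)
Your proposal is correct and matches the paper's approach exactly: the paper presents Corollary \ref{cor4} without a separate proof, introducing it simply as the case $q=p$ of Theorem \ref{thm2}, which is precisely your first paragraph's observation that $h=1$ collapses $h\lfloor i/h\rfloor$ to $i$. Your additional self-contained replay of the argument is accurate but unnecessary for the paper's purposes.
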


Note that in the $p$-adic expansion of the polynomial $f_k$, the condition $\deg (f_{k,i})\leq d_kp^i$
for $i\geq 1$ is significantly weaker than the condition $\deg(f_k) \leq d_k$. Namely, the degree of those
terms in $f_k$ which are divisible by $p$ can have much larger degree than $d_k$.

%\begin{remark}
%Corollary \ref{cor4} not only generalizes Theorem \ref{grythm} but also significantly improves Theorem \ref{grythm} for the Teichm\"{u}ller box. We will see from Corollary \ref{cor8} in the next section that Corollary \ref{cor4} also holds for any box $\mathcal B$ given in Theorem \ref{grythm}.
%\end{remark}

\section{$q$-divisibility theorem for a general box}\label{sect4}

%In this section, we discuss more about the box $\mathcal B=\mathcal I_1\times \cdots\times \mathcal I_n$ occurring in Theorem \ref{grythm}. In fact, we take each $\mathcal I_i$ to be the set of Teichm\"{u}ller representatives, i.e., $\mathcal I_i=T_q$, in our results in Section \ref{sect3}. The elements $x\in T_q$ have the special property which can be seen below from the generalized $\Z_q$-version of Proposition \ref{prop}. Note that Proposition \ref{prop} is just Proposition \ref{lastprop} for the simpliest case $q=p$.

%\begin{proposition}\label{lastprop}
%Let $q=p^h$ and $m\in \N$. Then the elements in $T_q$, the set of Teichm\"{u}ller representatives of $\mathbb{F}_{q}$ in $\mathbb{Z}_q$, have the following property
%$$x^{q-1}\equiv
%\left\{
 % \begin{array}{ll}
%    1 \pmod {p^m} & \hbox{if $x\not\equiv 0\pmod p$} \\
  %  0 \pmod {p^m} & \hbox{if $x\equiv 0\pmod p$,}
 % \end{array}
%\right.\quad\mbox{ for every $x\in T_q$}.$$
%\end{proposition}

The box $T_q^n$ in the previous section is called the Teichm\"{u}ller box. A natural question is whether our results in Section \ref{sect3}, especially Theorem \ref{thm2}, hold true for other non-Teichm\"{u}ller boxes $\mathcal B$.
We address this question in this section. For this purpose, we first need to understand a general box algebraically.

Recall that a box $\mathcal B$ in $\Z_q^n$ is defined to be a complete system of representatives of $\F_q^n$ in $\Z_q^n$. The box $\mathcal B$ considered in Theorem \ref{grythm} is a special case in split form, that is, $\mathcal B =\mathcal I_1\times \dots\times \mathcal I_n$, where each $\mathcal I_i$ is a complete system of representatives of $\F_q$ in $\Z_q/p \Z_q$.
Now, we would like to describe the box $\mathcal B$ in terms of the image of a polynomial system.

A polynomial $g\in\Z_q[x_1,\dots, x_n]$ is called a Teichm\"uller polynomial if all of its coeffcients are
Teichm\"uller elements in $T_q$. The polynomial $g$ is called reduced if its degree in each variable is at most $q-1$.
Thus, a reduced polynomial $g\in\Z_q[x_1,\dots, x_n]$ has total degree at most $n(q-1)$.
For any given box $\mathcal B$, the elements in $\mathcal B$ can be uniquely determined by a system of reduced
polynomials over $\Z_q$.

\begin{lemma}\label{lemkkgg}
 Let $p$ be a prime number and $q=p^h$ with $h\in \N$. Let $\mathcal B\subseteq\Z_q^n$ with $|\mathcal B|=q^n$ and $\mathcal B \mod p=\F_q^n$.
     \begin{itemize}
    \item[(i)]There exists a unique system of reduced Teichm\"uller polynomials $g_{ij}\in\Z_q[x_1,\dots, x_n]$ depending only on
 the box $\mathcal B$
 with $j\in [1,n],i\in\N$ such that for any $Y=(y_1, \dots, y_n)\in \mathcal B$, we have
\begin{equation}\label{zz}
     Y=X + (g_{11}(X),\dots,g_{1n}(X))p+ (g_{21}(X),\dots,g_{2n}(X))p^2+\cdots,
\end{equation}
where $X=(x_1, \dots, x_n)\in T_q^n$ is the Teichm\"uller lifting of the modulo $p$ reduction of $Y$.
    \item[(ii)]There exists a unique system of reduced polynomials $g_{j}\in\Z_q[x_1,\dots, x_n]$ depending only on
 the box $\mathcal B$
 with $j\in [1,n]$ such that for any $Y=(y_1, \dots, y_n)\in \mathcal B$, we have
\begin{equation}\label{zzz}
     Y=X + (g_{1}(X),\dots,g_{n}(X))p,
\end{equation}
where $X=(x_1, \dots, x_n)\in T_q^n$ is the Teichm\"uller lifting of the modulo $p$ reduction of $Y$. In particular, $\mathcal B$ is the image of $T_q^n$ under the polynomial map $X \rightarrow X + (g_{1}(X),\dots,g_{n}(X))p$.
    \item[(iii)]Assume that $\mathcal B$ is in split form. Then \eqref{zz} becomes
\begin{equation*}
     Y=X + (g_{11}(x_1),\dots,g_{1n}(x_n))p+ (g_{21}(x_1),\dots,g_{2n}(x_n))p^2+\cdots,
\end{equation*}
where each $g_{ij}(x_j)\in\Z_q[x_j]$ is a reduced Teichm\"uller polynomial in the one variable $x_j$ and hence has degree at most $q-1$ for $j\in [1,n],i\in\N$. Equivalently, \eqref{zzz} becomes
\begin{equation*}
     Y=X + (g_{1}(x_1),\dots,g_{n}(x_n))p,
\end{equation*}
where each $g_{j}(x_j)\in\Z_q[x_j]$ is a reduced polynomial in the one variable $x_j$ and hence degree at most $q-1$ for all $1\leq j\leq n$.
    \end{itemize}
\end{lemma}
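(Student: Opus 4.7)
My plan is to build the polynomials $g_{ij}$ of part (i) by an induction on $i$ that reduces, at each step, to the classical fact that every function $\F_q^n\to\F_q$ is represented by a unique polynomial in $\F_q[x_1,\dots,x_n]$ of degree at most $q-1$ in each variable. The single-polynomial formulation of (ii) will then come from bundling these $g_{ij}$ into $g_j:=\sum_{i\ge 1}p^{i-1}g_{ij}$, and (iii) will drop out from the uniqueness clause once one notes that in the split case the functions being interpolated depend on a single variable only.

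For (i), fix $j$. For each $X\in T_q^n$, let $Y$ be the unique element of $\mathcal B$ with $Y\equiv X\pmod p$. Since $y_j\equiv x_j\pmod p$, the rule $X\mapsto (y_j-x_j)/p\bmod p$ defines a function $T_q^n\to\F_q$, equivalently (via the Teichm\"uller bijection $\tau$) a function $\F_q^n\to\F_q$. Let $\widetilde g_{1j}\in\F_q[x_1,\dots,x_n]$ be its unique reduced interpolant and let $g_{1j}$ be obtained by applying $\tau$ coefficientwise; this is a reduced Teichm\"uller polynomial, and a direct computation shows $g_{1j}(X)\equiv (y_j-x_j)/p\pmod p$ for all $X\in T_q^n$, so $y_j-x_j\equiv g_{1j}(X)p\pmod{p^2}$. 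Inductively, once $g_{1j},\dots,g_{i-1,j}$ are fixed, apply the same recipe to the function $X\mapsto\bigl(y_j-x_j-\sum_{k<i}g_{kj}(X)p^k\bigr)/p^i\bmod p$ to produce $g_{ij}$. Uniqueness at each step is automatic: a reduced Teichm\"uller polynomial is determined by its mod-$p$ reduction, which in turn is the unique reduced polynomial over $\F_q$ representing the prescribed function. The series $\sum_{i\ge 1}g_{ij}(X)p^i$ converges $p$-adically to $y_j-x_j$ by construction, giving (i).

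For (ii), set $g_j(X):=\sum_{i\ge 1}p^{i-1}g_{ij}(X)$; each coefficient of this formal sum is a $p$-adically convergent series in $\Z_q$, so $g_j$ is a well-defined reduced polynomial in $\Z_q[x_1,\dots,x_n]$, and $pg_j(X)=y_j-x_j$ on $T_q^n$. Uniqueness of $g_j$ reduces to injectivity of the evaluation map from reduced polynomials in $\Z_q[x_1,\dots,x_n]$ to functions $T_q^n\to\Z_q$, which follows by Nakayama from its bijective mod-$p$ analogue, both sides being free $\Z_q$-modules of rank $q^n$. For (iii), in the split case $y_j$ depends only on $x_j$, so the function interpolated at each inductive step in (i) is a function of $x_j$ alone; uniqueness of the reduced interpolant in $\F_q[x_1,\dots,x_n]$ then forces $g_{ij}\in\Z_q[x_j]$, and the corresponding statement for $g_j$ follows from (ii) by the same specialization.

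The one genuine pitfall will be avoiding the conflation of the value $g_{ij}(X)$ with a Teichm\"uller digit of $y_j-x_j$: evaluation of a reduced Teichm\"uller polynomial at a point of $T_q^n$ is not itself a Teichm\"uller element in general, so the inductive step must be phrased at the level of congruences modulo $p$, which is precisely what is preserved by taking the Teichm\"uller lift of the $\F_q$-interpolant. Once that is straightened out, the rest is bookkeeping with $p$-adic convergence.
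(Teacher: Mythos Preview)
Your proposal is correct and follows essentially the same approach as the paper: inductively interpolate the function $X\mapsto\bigl(y_j-x_j-\sum_{k<i}g_{kj}(X)p^k\bigr)/p^i\bmod p$ by a reduced polynomial over $\F_q$, Teichm\"uller-lift it, and repeat; part (ii) is obtained by summing $g_j=\sum_{i\ge 1}p^{i-1}g_{ij}$, and (iii) follows from the split structure. Your treatment is in fact slightly more thorough than the paper's in two places---you supply an explicit Nakayama-type argument for the uniqueness in (ii), and you correctly flag the subtlety that $g_{ij}(X)$ need not be a Teichm\"uller element so the induction must be run at the level of congruences modulo $p$---but these are refinements of, not departures from, the paper's argument.
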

\begin{proof} (ii) is equivalent to (i) by taking
$$g_j =\sum_{i=1}^{\infty} p^{i-1} g_{ij}, \ 1\leq j \leq n.$$
Thus, the right side is just the Teichm\"uller expansion of the left side. (iii) is a consequence of (i) and (ii) by applying them
to each one dimensional factor of the split box $\mathcal B$. We shall now prove (i).

For a given $Y=(y_1, \dots, y_n)\in \mathcal B$, we can write uniquely
\begin{equation*}
  (y_1, \dots, y_n)=(x_{01}, \dots, x_{0n})+(x_{11}, \dots, x_{1n})p,
\end{equation*}
where $X=(x_{01}, \dots, x_{0n}) \in T_q^n$ and $(x_{11}, \dots, x_{1n}) \in \Z_q^n$.
The vector $X=(x_{01}, \dots, x_{0n})$ in $T_q^n$ and the vector $Y=(y_1, \dots, y_n)$ in $\mathcal B$ determine each other.
In fact, $X$ is just the Teichm\"uller lifting of the reduction $Y \mod p$, and $Y$ is the unique element in $\mathcal B$ with the
same mod $p$ reduction as $X$. In particular, $(x_{11}, \dots, x_{1n})$ is also uniquely determined by
 $X=(x_{01}, \dots, x_{0n})$.

Letting $Y$ run over $\mathcal B$, then $X$ runs over $T_q^n$ as $\mathcal B \mod p=\F_q^n$ by assumption.
For each $1\leq j \leq n$, the quantity $x_{1j}$ is a function of $X$. This establishes a map from $T_q^n$ to $T_q$, and we consider the corresponding map from $\F_q^n$ to $\F_q$, namely
 \begin{equation*}
  \widetilde{g}_{1j}:\quad \F_q^n \rightarrow \F_q,\quad  X\mapsto \widetilde{g}_{1j}(X).
\end{equation*}
Recall the fact that any map from $\F_q^n$ to $\F_q$ can be expressed uniquely by a reduced polynomial in $n$ variables with coefficients in $\F_q$. In particular, our map $\widetilde{g}_{1j}$ is a reduced polynomial in $\mathbb{F}_q[x_1,\cdots, x_n]$.
Let $g_{1j}$ be the Teichm\"uller lifting of $\widetilde{g}_{1j}$ in $\Z_q[x_1,\dots,x_n]$. Then, we have proved
$$Y = X + (g_{11}(X), \cdots, g_{1n}(X))p + (x_{21}, \cdots, x_{2n})p^2,$$
where $(x_{21}, \cdots, x_{2n}) \in \Z_q^n$ is uniquely determined by $X$. Continuing this procedure, we find
uniquely determined reduced Teichm\"uller polynomials $g_{ij}(X) \in \Z_q[x_1, \cdots, x_n]$ ($i\geq 1$, $1\leq j\leq n$) such that (i) holds.
The lemma is proved.

 \end{proof}

\begin{remark}

For convenience, set $g_{0j}(X)=x_j$ for $j\in [1,n]$. Then, equation \eqref{zz} becomes
\begin{equation}
     Y= (g_{01}(X),\dots,g_{0n}(X))+ (g_{11}(X),\dots,g_{1n}(X))p+ (g_{21}(X),\dots,g_{2n}(X))p^2+\cdots
\end{equation}
We simply write $\mathcal B=T_q^n(g_{ij}:j\in [1,n],i\in\N)$ provided that $g_{ij}$'s are reduced Teichm\"uller polynomials
in  $\Z_q[x_1,\dots, x_n]$.
\end{remark}

This completes our discussion on the box $\mathcal B$. We now move to the reduction  from polynomial congruences in the
box $\mathcal B$ to equations over the finite field $\mathbb{F}_q$.

Fix a nonzero vector $\alpha=(\alpha_1,\dots,\alpha_n)\in \N_0^n$ with $|\alpha|:=\alpha_1+\dots+\alpha_n$. For an element $\beta\in \N_0^{|\alpha|}$ of the form
\begin{equation}\label{77nn}
  \beta=(\beta_{11},\dots,\beta_{\alpha_{1}1};\beta_{12},\dots,\beta_{\alpha_{2}2};\dots; \beta_{1n},\dots,\beta_{\alpha_{n}n}),
\end{equation}
we write $\beta=(\beta_{tl})$ for short with $\beta_{tl}$ arranged as in \eqref{77nn} and define $|\beta|:=\sum_{l=1}^{n}\sum_{t=1}^{\alpha_l}\beta_{tl}$. By \eqref{zz}, one has $Y=(y_1,\dots,y_n)$ with $y_j=\sum_{i=0}^{\infty}g_{ij}(X)p^i$ for $j\in[1,n]$. Then
\begin{equation}\label{77cc}
\prod_{l=1}^{n}\left(\sum_{k=0}^{\infty}g_{_{kl}}(X)p^k\right)^{\alpha_l}=
\sum\prod_{l=1}^{n}\prod_{t=1}^{\alpha_l}\left(g_{_{{\beta_{tl}}l}}(X)p^{\beta_{tl}}\right)=
\sum\left(\prod_{l=1}^{n}\prod_{t=1}^{\alpha_l}g_{_{{\beta_{tl}}l}}(X)\right)p^{|\beta|},
\end{equation}
where both the sums in \eqref{77cc} run over all the vectors $\beta=(\beta_{tl})\in \N_0^{|\alpha|}$. In particular, if $\deg(g_{ij})\leq
p^{h\lfloor\frac{i}{h}\rfloor}$ for $j\in [1,n],i\in [1, m-1]$, then for any $\beta=(\beta_{tl})\in \N_0^{|\alpha|}$ with $|\beta|\leq m-1$, we have
\begin{equation}\label{77aa}
\deg\left(\prod_{l=1}^{n}\prod_{t=1}^{\alpha_l}g_{_{{\beta_{tl}}l}}(X)\right)=\sum_{l=1}^{n}\sum_{t=1}^{\alpha_l}\deg(g_{_{{\beta_{tl}}l}}(X))\leq \sum_{l=1}^{n}\sum_{t=1}^{\alpha_l}p^{h\lfloor\frac{\beta_{tl}}{h}\rfloor}
\leq |\alpha|p^{h\lfloor\frac{|\beta|}{h}\rfloor}.
\end{equation}

\subsection{For single polynomial}

Like in Section \ref{sect3}, we first consider the single polynomial case.

\begin{theorem}[Strong Version]\label{thm3}
Let $p$ be a prime number and $q=p^h$ with $h\in \N$. Let $\mathcal B\subseteq\Z_q^n$ with $|\mathcal B|=q^n$ and $\mathcal B \mod p=\F_q^n$, and $\mathcal B=T_q^n(g_{ij}:j\in [1,n],i\in\N)$. Let $f\in \mathbb{Z}_q[x_1,\ldots,x_n]$ be a nonzero polynomial. Given an $m\in \N$, let
\begin{equation*}
  V:=\{X\in \mathcal B \mid  f(X)\equiv 0 \pmod {p^{m}}\}.
\end{equation*}
Write the Teichm\"uller expansion $f=\sum_{i=0}^{\infty} p^i f_i$ with $f_i=\sum_{j=1}^{r}a_{ij}X^{u_j}$. Let $d\in \N$. If for each term $a_{ij}X^{u_j}$, we have
\begin{equation}\label{degreesum}
\deg\left(a_{ij}\prod_{l=1}^{n}\prod_{t=1}^{\alpha_l}g_{_{{\beta_{tl}}l}}(X)\right)\leq dp^{h\lfloor\frac{i+|\beta|}{h}\rfloor}
\end{equation}
for all $i\in [0,m-1]$, $j\in [1,r]$,  $\beta=(\beta_{tl})\in \N_0^{|u_j|}$ with the sum $i+|\beta|\leq m-1$, then
\begin{equation*}
\mathrm{ord}_q(|V|)\geq \left\lceil\frac{n-\frac{p^{m}-1}{p-1}d}{p^{m-1}d}\right\rceil^*.
\end{equation*}
\end{theorem}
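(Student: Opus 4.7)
The plan is to mirror the proof of Theorem \ref{thm1}, using Lemma \ref{lemkkgg} to parametrize $\mathcal B$ by $T_q^n$ and thereby reduce the counting problem in $\mathcal B$ to one over $T_q^n$. Concretely, each $Y\in\mathcal B$ admits a unique expression $y_l = \sum_{k\geq 0} g_{kl}(X)p^k$ with $g_{0l}(X)=x_l$ and $X\in T_q^n$ the Teichm\"uller lift of $Y\bmod p$, so $V$ is in bijection with $\widetilde V:=\{X\in T_q^n : f(Y(X))\equiv 0\pmod{p^m}\}$.

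I would then substitute $y_l = \sum_k g_{kl}(X)p^k$ into the Teichm\"uller expansion $f=\sum_i p^i\sum_j a_{ij}X^{u_j}$ and apply the expansion \eqref{77cc} to obtain
\[
p^i a_{ij}Y^{u_j} = \sum_{\beta\in\N_0^{|u_j|}} a_{ij}\prod_{l=1}^n\prod_{t=1}^{\alpha_l} g_{\beta_{tl}\,l}(X)\cdot p^{i+|\beta|}.
\]
Since each $a_{ij}$ and each $g_{kl}$ is a Teichm\"uller polynomial and the product of Teichm\"uller elements is Teichm\"uller, further expanding monomial-by-monomial and discarding terms with $i+|\beta|\geq m$ rewrites $f(Y)$ modulo $p^m$ as $\sum_k \tau(\widetilde c_k\widetilde X^{v_k})p^{N_k}$ for finitely many monomial atoms indexed by $k$, with $\widetilde c_k\in\F_q$, $v_k\in\N_0^n$, and $N_k=i+|\beta|\in[0,m-1]$. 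Lemma \ref{keylem} then converts the congruence $f(Y)\equiv 0\pmod{p^m}$ into the system $s_0^{(R)}=s_1^{(R)}=\cdots=s_{m-1}^{(R)}=0$ over $\F_q^n$, where each atom contributes a single nonzero Teichm\"uller coefficient $\widetilde c_k\widetilde X^{v_k}$ at position $N_k$, and $|\widetilde V|$ equals the number of common zeros in $\F_q^n$ of this system.

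The final step is a degree bound on each $s_N^{(R)}$. The hypothesis \eqref{degreesum} forces $|v_k|\leq d\,p^{h\lfloor N_k/h\rfloor}$ for every monomial atom; using $\widetilde X_l^q=\widetilde X_l$ on $\F_q$, the value $(\widetilde c_k\widetilde X^{v_k})^{p^{N_k}}$ is represented by a polynomial of degree $\leq |v_k|\,p^{N_k-h\lfloor N_k/h\rfloor}\leq d\,p^{N_k}$. Applying Lemma \ref{homlem} with weights $\mathrm{wt}(x_{tk})=p^t$ and substituted values of degree $\leq d\,p^t$ then gives $\deg(s_N^{(R)})\leq d\,p^N$. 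Feeding the system $(s_0^{(R)},\ldots,s_{m-1}^{(R)})$ into the Ax--Katz theorem (Theorem \ref{axkatzthm}) with degree sum $d(p^m-1)/(p-1)$ and maximum degree $d\,p^{m-1}$ yields the desired lower bound on $\mathrm{ord}_q(|V|)$. The main technical obstacle is the degree-bookkeeping in this last step: the somewhat awkward exponent $h\lfloor(i+|\beta|)/h\rfloor$ in \eqref{degreesum} is exactly what is needed so that the factor $p^{h\lfloor N_k/h\rfloor}$ from the monomial-degree bound combines with the Frobenius-collapsed factor $p^{N_k-h\lfloor N_k/h\rfloor}$ to produce the clean bound $d\,p^{N_k}$ required by the weighted-homogeneity framework of Lemma \ref{homlem}; once this telescoping is in place, the rest of the argument reduces transparently to the Teichm\"uller-box case.
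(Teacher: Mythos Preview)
Your proposal is correct and follows essentially the same route as the paper: parametrize $\mathcal B$ by $T_q^n$ via Lemma~\ref{lemkkgg}, substitute into $f$ using the expansion~\eqref{77cc}, truncate at level $p^m$, invoke Lemma~\ref{keylem} to obtain the system $s_0^{(R)}=\cdots=s_{m-1}^{(R)}=0$ over $\F_q$, and then use the Frobenius identity $a^{q}=a$ on $\F_q$ together with hypothesis~\eqref{degreesum} to cap the degree of each $s_N^{(R)}$ by $dp^N$ before applying Ax--Katz. Your write-up is in fact slightly more explicit than the paper's (which simply says ``the remaining is similar to the proof of Theorem~\ref{thm1}''), but the ideas and the degree bookkeeping are identical.
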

\begin{proof}
We follow the notations used in the proof of Theorem \ref{thm1}, so we do not explain them more. Choose an arbitrary term of $f$, say $a_{ij}X^up^i$ with $u=(\alpha_1,\dots,\alpha_n)\in \N_0^n$ and take any $\beta=(\beta_{tj})\in \N_0^{|u|}$. Let $Y=(y_1,\dots,y_n)$ with $y_j=\sum_{i=0}^{\infty}g_{ij}(X)p^i$ for $j\in[1,n]$. Substituting $Y$ for $X$ in this term, by \eqref{77cc} we have
\begin{equation*}
a_{ij}p^i\prod_{l=1}^{n}\prod_{t=1}^{\alpha_l}\left(g_{_{{\beta_{tl}}l}}(X)p^{\beta_{tl}}\right)=a_{ij}p^{i+|\beta|}\prod_{l=1}^{n}\prod_{t=1}^{\alpha_l}g_{_{{\beta_{tl}}l}}(X).
\end{equation*}
This is zero modulo $p^m$ if $i+|\beta|\geq m$. Thus, we can assume $i+|\beta|\leq m-1$.
Since $a^{p^h} = a^q = a$ for $a \in \mathbb{F}_q$, we deduce
$$\left(\widetilde{a}_{ij}\prod_{l=1}^{n}\prod_{t=1}^{\alpha_l}\widetilde{g}_{_{\beta_{tl}l}}(X)\right)^{p^{i+|\beta|}}=\left((\widetilde{a}_{ij}\prod_{l=1}^{n}\prod_{t=1}^{\alpha_l}\widetilde{g}_{_{\beta_{tl}l}}(X)\right)^{p^{i+|\beta|-h\lfloor\frac{i+|\beta|}{h}\rfloor}}.$$
By \eqref{degreesum}, we have
$$\deg\left(a_{ij}\prod_{l=1}^{n}\prod_{t=1}^{\alpha_l}g_{_{{\beta_{tl}}l}}(X)\right)^{p^{i+|\beta|-h\lfloor\frac{i+|\beta|}{h}\rfloor}}
\leq dp^{h\lfloor\frac{i+|\beta|}{h}\rfloor}{p^{i+|\beta|-h\lfloor\frac{i+|\beta|}{h}\rfloor}}= dp^{i+|\beta|}.$$
The remaining is similar to the proof of Theorem \ref{thm1}.
\end{proof}

A weak version is the following result.
\begin{corollary}[Weak Version]\label{thm33}
Let $p$ be a prime number and $q=p^h$ with $h\in \N$. Let $\mathcal B\subseteq\Z_q^n$ with $|\mathcal B|=q^n$ and $\mathcal B \mod p=\F_q^n$, and $\mathcal B=T_q^n(g_{ij}:j\in [1,n],i\in\N)$. Let $f\in \mathbb{Z}_q[x_1,\ldots,x_n]$ be a nonzero polynomial. Given an $m\in \N$, let
\begin{equation*}
  V:=\{X\in \mathcal B \mid  f(X)\equiv 0 \pmod {p^{m}}\}.
\end{equation*}
If $\deg(g_{ij})\leq
p^{h\lfloor\frac{i}{h}\rfloor}$ for $j\in [1,n],i\in [1, m-1]$, then
\begin{equation*}
\mathrm{ord}_q(|V|)\geq \left\lceil\frac{n-\frac{p^{m}-1}{p-1}\deg (f)}{p^{m-1}\deg (f)}\right\rceil^*.
\end{equation*}
\end{corollary}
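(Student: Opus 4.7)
The plan is to deduce Corollary \ref{thm33} as a direct specialization of the strong version Theorem \ref{thm3}, with the parameter $d$ chosen to be $\deg(f)$. Since the two statements have the same conclusion with the same $d = \deg(f)$, all that remains is to check that the hypothesis ``$\deg(g_{ij}) \leq p^{h\lfloor i/h\rfloor}$ for $j\in[1,n]$, $i \in [1, m-1]$'' forces the more delicate inequality \eqref{degreesum} to hold with $d = \deg(f)$.

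To carry this out, I would fix an arbitrary Teichm\"uller-expansion term $a_{ij}X^{u_j}$ of $f$ (with $u_j = (\alpha_1, \dots, \alpha_n)$) and an arbitrary vector $\beta = (\beta_{tl}) \in \N_0^{|u_j|}$ with $i + |\beta| \leq m-1$, then estimate
\begin{equation*}
\deg\!\left(a_{ij}\prod_{l=1}^{n}\prod_{t=1}^{\alpha_l} g_{\beta_{tl}\,l}(X)\right)
= \sum_{l=1}^{n}\sum_{t=1}^{\alpha_l}\deg\bigl(g_{\beta_{tl}\,l}\bigr),
\end{equation*}
using that $a_{ij}\in T_q$ contributes no degree. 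For $\beta_{tl}\geq 1$ the hypothesis gives $\deg(g_{\beta_{tl}\,l}) \leq p^{h\lfloor\beta_{tl}/h\rfloor}$; for $\beta_{tl}=0$ the convention $g_{0l}(X)=x_l$ gives $\deg(g_{0l}) = 1 = p^{h\lfloor 0/h\rfloor}$, so the same bound holds uniformly. Together with the already-established estimate \eqref{77aa} this yields
\begin{equation*}
\sum_{l=1}^{n}\sum_{t=1}^{\alpha_l}\deg\bigl(g_{\beta_{tl}\,l}\bigr)
\;\leq\; |u_j|\,p^{h\lfloor |\beta|/h\rfloor}
\;\leq\; \deg(f)\cdot p^{h\lfloor (i+|\beta|)/h\rfloor},
\end{equation*}
where in the last step I use $|u_j| \leq \deg(f_i) \leq \deg(f)$ (the latter holds because the Teichm\"uller expansion of $f$ is free from $p$-adic cancellation among coefficients) and the obvious monotonicity $h\lfloor|\beta|/h\rfloor \leq h\lfloor (i+|\beta|)/h\rfloor$. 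This is precisely \eqref{degreesum} with $d = \deg(f)$.

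Once the hypothesis \eqref{degreesum} is verified, Theorem \ref{thm3} applies verbatim and delivers the required lower bound on $\mathrm{ord}_q(|V|)$. The ``main obstacle'' here is essentially bookkeeping: making sure the auxiliary convention $g_{0j}(X) = x_j$ is compatible with the bound $\deg(g_{ij})\leq p^{h\lfloor i/h\rfloor}$ at $i=0$, and tracking that $|u_j|$ (the degree of a single monomial of some $f_i$) is genuinely bounded by $\deg(f)$ rather than by some smaller partial degree. Both of these are routine, so the corollary is really just the simplest packaging of Theorem \ref{thm3} in terms of the total degree of $f$ and uniform degree bounds on the polynomials $g_{ij}$ representing the box.
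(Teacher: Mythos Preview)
Your proposal is correct and follows essentially the same route as the paper: set $d=\deg(f)$, invoke the estimate \eqref{77aa} to bound the degree of the product by $|u_j|\,p^{h\lfloor|\beta|/h\rfloor}$, and then use $|u_j|\leq\deg(f)$ together with the monotonicity of $\lfloor\cdot/h\rfloor$ to obtain \eqref{degreesum}, after which Theorem~\ref{thm3} gives the result. Your version is in fact slightly more explicit than the paper's (which simply says ``by \eqref{77aa} we see that \eqref{degreesum} holds naturally''), in particular in handling the boundary case $\beta_{tl}=0$ via the convention $g_{0l}(X)=x_l$.
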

\begin{proof}
  Suppose $\deg(g_{ij})\leq
p^{h\lfloor\frac{i}{h}\rfloor}$ for $j\in [1,n],i\in [1, m-1]$.
Let $d=\deg(f)$. By \eqref{77aa} we see that \eqref{degreesum} holds naturally for $\alpha=(\alpha_1,\dots,\alpha_n)\in\N_0^n$ and $\beta=(\beta_{tl})\in \N_0^{|\alpha|}$ with $|\beta| \leq m-1$.
\end{proof}

Corollary \ref{thm33} for the case $q=p$ becomes simpler because $h\lfloor\frac{i}{h}\rfloor=i$ for $h=1$.

%\begin{corollary}\label{cor5}
%Let $\mathcal B\subseteq\Z_p^n$ with $|\mathcal B|=p^n$ and $\mathcal B \mod p=\F_p^n$, and $\mathcal B=T_p^n(g_{ij}:j\in [1,n],i\in\N)$. Let $f\in \mathbb{Z}_p[x_1,\ldots,x_n]$ be a nonzero polynomial. Given an $m\in \N$, let
%\begin{equation*}
%  V:=\{X\in \mathcal B \mid  f(X)\equiv 0 \pmod {p^{m}}\}.
%\end{equation*}
%Write $f=\sum_{j=1}^{r}\sum_{i=0}^{\infty}(a_{ij}X^{u_j})p^i$ with $a_{ij}\in T_p$. Let $d\in \N$. If for any term $a_{ij}X^{u_j}$, we have
%\begin{equation*}
%\deg\left(a_{ij}\prod_{l=1}^{n}\prod_{t=1}^{\alpha_l}g_{_{{\beta_{tl}}l}}(X)\right)\leq dp^{i+|\beta|}
%\end{equation*}
%for all $i\in [0,m-1]$, $j\in [1,r]$,  $\beta=(\beta_{tj})\in \N_0^{|u_j|}$ with the sum $i+|\beta|\leq m-1$, then
%\begin{equation*}
%\mathrm{ord}_p(|V|)\geq \left\lceil\frac{n-\frac{p^{m}-1}{p-1}d}{p^{m-1}d}\right\rceil^*.
%\end{equation*}
%\end{corollary}
%A slightly weaker but much simpler version of Corollary \ref{cor5} is the following

\begin{corollary}[Weak Version]\label{cor6}
Let $p$ be a prime number. Let $\mathcal B\subseteq\Z_p^n$ with $|\mathcal B|=p^n$ and $\mathcal B \mod p=\F_p^n$, and $\mathcal B=T_p^n(g_{ij}:j\in [1,n],i\in\N)$. Let $f\in \mathbb{Z}_p[x_1,\ldots,x_n]$ be a nonzero polynomial. Given an $m\in \N$, let
\begin{equation*}
  V:=\{X\in \mathcal B \mid  f(X)\equiv 0 \pmod {p^{m}}\}.
\end{equation*}
If $\deg(g_{ij})\leq p^i$ for all $j\in [1,n],i\in [1, m-1]$, then
\begin{equation}\label{llwwhh}
\mathrm{ord}_p(|V|)\geq \left\lceil\frac{n-\frac{p^{m}-1}{p-1}\deg (f)}{p^{m-1}\deg (f)}\right\rceil^*.
\end{equation}
In particular, \eqref{llwwhh} holds true for all $\mathcal B$ in split form.
\end{corollary}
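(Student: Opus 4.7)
The plan is to obtain Corollary \ref{cor6} as a direct specialization of the more general Corollary \ref{thm33}, combined with the structural description of split boxes provided by Lemma \ref{lemkkgg}(iii).

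For the main inequality \eqref{llwwhh}, I would set $q=p$, equivalently $h=1$, in Corollary \ref{thm33}. Under this specialization, the expression $h\lfloor \frac{i}{h}\rfloor$ collapses to just $i$, so the hypothesis $\deg(g_{ij}) \leq p^{h\lfloor i/h\rfloor}$ required by Corollary \ref{thm33} becomes exactly $\deg(g_{ij}) \leq p^{i}$, which is precisely the hypothesis given here. The conclusion of Corollary \ref{thm33} is identical to \eqref{llwwhh}, so no further work is needed for this part.

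For the ``in particular'' assertion, I would invoke Lemma \ref{lemkkgg}(iii): when $\mathcal B = \mathcal I_1 \times \cdots \times \mathcal I_n$ is in split form, each representing polynomial $g_{ij}(x_j)$ depends only on the single variable $x_j$ and is reduced, so $\deg(g_{ij}) \leq q-1 = p-1$. For every $i \geq 1$ one has $p-1 < p \leq p^{i}$, so the degree bound $\deg(g_{ij}) \leq p^{i}$ holds automatically. Therefore the first assertion applies to every split box without any additional hypothesis, yielding \eqref{llwwhh} unconditionally in that case.

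There is no real obstacle: the proof is purely a bookkeeping verification that (i) the general bound in Corollary \ref{thm33} specializes correctly when $h=1$, and (ii) the split-box structure delivers the required degree bound via the reducedness of the one-variable polynomials $g_{ij}(x_j)$. All substantive work has been done earlier, in the proofs of Corollary \ref{thm33} and Lemma \ref{lemkkgg}.
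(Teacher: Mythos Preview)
Your proposal is correct and matches the paper's approach exactly: the paper introduces Corollary \ref{cor6} with the remark that ``Corollary \ref{thm33} for the case $q=p$ becomes simpler because $h\lfloor\frac{i}{h}\rfloor=i$ for $h=1$,'' and the split-box assertion follows from Lemma \ref{lemkkgg}(iii) just as you indicate.
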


\subsection{For polynomial system}

We extend the results above to the system of polynomials without proofs.

\begin{theorem}[Strong Version]\label{thm4}
Let $p$ be a prime number and $q=p^h$ with $h\in \N$. Let $\mathcal B\subseteq\Z_q^n$ with $|\mathcal B|=q^n$ and $\mathcal B \mod p=\F_q^n$, and $\mathcal B=T_q^n(g_{ij}:j\in [1,n],i\in\N)$. Let $f_1,\dots, f_s\in \mathbb{Z}_q[x_1,\ldots,x_n]$ be a system of nonzero polynomials. For given $m_1,\dots,m_s\in \N$, let
\begin{equation*}
  V:=\{X\in \mathcal B \mid  f_k(X)\equiv 0 \pmod {p^{m_k}} \mbox{ for all } k\in [1,s]\}.
\end{equation*}
For each $k\in[1,s]$, write the $p$-adic Teichm\"uller expansion
$$f_k=\sum_{i=0}^{\infty} p^if_{k,i}(X), $$
with $f_{k,i}(X)=\sum_{j=1}^{r_k}a_{ij}^{(k)}X^{u_j^{(k)}}$. Let $d_1,\dots,d_s\in \N$. If for each term $a_{ij}^{(k)}X^{u_j^{(k)}}$, we have
\begin{equation*}
\deg\left(a_{ij}^{(k)}\prod_{l=1}^{n}\prod_{t=1}^{\alpha_l}g_{_{{\beta_{tl}}l}}(X)\right)\leq d_kp^{h\lfloor\frac{i+|\beta|}{h}\rfloor}
\end{equation*}
for all $i\in [0,m_k-1]$, $j\in [1,r_k]$,  $\beta=(\beta_{tj})\in \N_0^{|u_j^{(k)}|}$ with the sum $i+|\beta|\leq m_k-1$, then
\begin{equation*}
\mathrm{ord}_q(|V|)\geq \left\lceil\frac{n-\sum_{k=1}^s\frac{p^{m_k}-1}{p-1}d_k}{\max\nolimits_{k \in [1,s]}\{p^{m_k-1}d_k\}}\right\rceil^*.
\end{equation*}
\end{theorem}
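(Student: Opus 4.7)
The plan is to combine the reduction strategy of Theorem \ref{thm3} with the multi-polynomial bookkeeping of Theorem \ref{thm2}. First, fix $X\in T_q^n$ and let $Y=(y_1,\dots,y_n)\in\mathcal B$ be the corresponding element determined by $y_j=\sum_{i=0}^{\infty}g_{ij}(X)p^i$. For each $k\in[1,s]$ I would substitute $Y$ into the Teichm\"uller expansion $f_k=\sum_i p^i f_{k,i}$ and use the expansion \eqref{77cc} term by term. Applying Lemma \ref{keylem} to the resulting $p$-adic series, the congruence $f_k(Y)\equiv 0\pmod{p^{m_k}}$ is equivalent to the vanishing of $m_k$ polynomials $g_{k,t}(X)$ for $t\in[0,m_k-1]$, where each $g_{k,t}$ is obtained from the $s_t^{(r)}$ polynomial of Theorem \ref{smm} applied to the Teichm\"uller components of the terms $a_{ij}^{(k)}p^{i+|\beta|}\prod_{l,t}g_{\beta_{tl}l}(X)$ with $i+|\beta|\leq m_k-1$.

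Next I would track degrees. By Theorem \ref{smm}, $s_t^{(r)}$ is homogeneous of degree $p^t$ in its variables, so $\deg(g_{k,t})$ is bounded by $p^t$ times the maximal degree of its arguments. Using the Frobenius identity $a^{p^h}=a$ in $\mathbb F_q$, each argument $(\widetilde{a}_{ij}^{(k)}\prod_{l,t}\widetilde{g}_{\beta_{tl}l}(X))^{p^{i+|\beta|}}$ may be replaced by its $p^{i+|\beta|-h\lfloor(i+|\beta|)/h\rfloor}$-th power. By the hypothesis \eqref{degreesum}-analogue in Theorem \ref{thm4}, this reduced power has degree at most $d_k p^{h\lfloor(i+|\beta|)/h\rfloor}\cdot p^{i+|\beta|-h\lfloor(i+|\beta|)/h\rfloor}=d_k p^{i+|\beta|}$. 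Since only indices with $i+|\beta|\leq t$ contribute nontrivially at level $t$ (the $s_t^{(r)}$ only involves the first $t+1$ ghost components), each argument of $s_t^{(r)}$ that appears has degree at most $d_k p^t$ (via the weighted homogeneity with $\mathrm{wt}(x_{ij})=p^i$ and Lemma \ref{homlem}'s generalization), giving $\deg(g_{k,t})\leq d_k p^t$.

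Having produced a system of $\sum_{k=1}^s m_k$ polynomial equations over $\mathbb F_q$ in $n$ variables whose common zero set in $\mathbb F_q^n$ is in bijection with $V$, I would then apply the Ax-Katz theorem (Theorem \ref{axkatzthm}). The total degree sum is
\begin{equation*}
\sum_{k=1}^{s}\sum_{t=0}^{m_k-1}\deg(g_{k,t})\leq \sum_{k=1}^{s}\sum_{t=0}^{m_k-1}d_k p^t=\sum_{k=1}^{s}\frac{p^{m_k}-1}{p-1}d_k,
\end{equation*}
and the maximum degree is at most $\max_{k\in[1,s]}\{p^{m_k-1}d_k\}$. Plugging these into the Ax-Katz bound yields exactly the claimed inequality for $\mathrm{ord}_q(|V|)$.

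The main obstacle I expect is the bookkeeping that justifies $\deg(g_{k,t})\leq d_k p^t$ from the hypothesis on $\deg(a_{ij}^{(k)}\prod g_{\beta_{tl}l})$: one must verify that when the Witt-addition polynomial $s_t^{(r)}$ is evaluated at the Frobenius-reduced arguments, the weighted homogeneity (Lemma \ref{homlem}) interacts correctly with the degree bound so that the $p^t$ factor from homogeneity and the $p^{i+|\beta|}$ factors from Frobenius coalesce without overshoot. This is precisely the point already handled in Theorem \ref{thm3}, so the extension amounts to carrying the index $k$ through the argument and summing over $k$; no essentially new estimate is required.
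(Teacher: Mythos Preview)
Your proposal is correct and follows exactly the approach the paper intends: the paper does not supply a proof of Theorem~\ref{thm4} at all, stating only that ``we extend the results above to the system of polynomials without proofs,'' and your argument---carrying the index $k$ through the single-polynomial reduction of Theorem~\ref{thm3} and then applying Ax--Katz to the resulting $\sum_k m_k$ equations over $\mathbb{F}_q$, precisely as in the passage from Theorem~\ref{thm1} to Theorem~\ref{thm2}---is the intended one. The degree bookkeeping you outline (Frobenius reduction plus the weighted-homogeneity bound from Lemma~\ref{homlem}) is indeed the same computation already carried out in the proof of Theorem~\ref{thm3}, so no new estimate is needed.
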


A weaker consequence is the following

\begin{corollary}[Weak Version]\label{thm44}
Let $p$ be a prime number and $q=p^h$ with $h\in \N$. Let $\mathcal B\subseteq\Z_q^n$ with $|\mathcal B|=q^n$ and $\mathcal B \mod p=\F_q^n$, and $\mathcal B=T_q^n(g_{ij}:j\in [1,n],i\in\N)$. Let $f_1,\dots, f_s\in \mathbb{Z}_q[x_1,\ldots,x_n]$ be a system of nonzero polynomials. For given $m_1,\dots,m_s\in \N$, let
\begin{equation*}
  V:=\{X\in \mathcal B \mid  f_k(X)\equiv 0 \pmod {p^{m_k}} \mbox{ for all } k\in [1,s]\}.
\end{equation*}
If $\deg(g_{ij})\leq
p^{h\lfloor\frac{i}{h}\rfloor}$ for $j\in [1,n],i\in [1, m-1]$,
 then
\begin{equation*}
\mathrm{ord}_q(|V|)\geq \left\lceil\frac{n-\sum_{k=1}^s\frac{p^{m_k}-1}{p-1}\deg (f_k)}{\max\nolimits_{k \in [1,s]}\{p^{m_k-1}\deg (f_k)\}}\right\rceil^*.
\end{equation*}
\end{corollary}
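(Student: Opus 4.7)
The plan is to derive Corollary \ref{thm44} as an immediate specialization of the strong version Theorem \ref{thm4}. I would take $d_k := \deg(f_k)$ for each $k \in [1,s]$; with this choice the conclusion of Theorem \ref{thm4} matches that of Corollary \ref{thm44} word for word, so the entire task reduces to verifying that the degree hypothesis of Theorem \ref{thm4} is automatically fulfilled under the weaker and more uniform assumption $\deg(g_{ij}) \leq p^{h\lfloor i/h \rfloor}$ adopted in the corollary.

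The first step is to observe that the Teichm\"uller expansion $f_k = \sum_i p^i f_{k,i}$ acts coefficient-wise: each coefficient of $f_k$ is split $p$-adically into Teichm\"uller digits, so the monomial support of every $f_{k,i}$ is contained in the monomial support of $f_k$. Consequently every term $a_{ij}^{(k)} X^{u_j^{(k)}}$ appearing in $f_{k,i}$ satisfies $|u_j^{(k)}| \leq \deg(f_{k,i}) \leq \deg(f_k) = d_k$, and the Teichm\"uller scalar $a_{ij}^{(k)} \in T_q$ contributes nothing to the degree.

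The second step is to invoke inequality \eqref{77aa}, which was established earlier precisely from the assumption $\deg(g_{ij}) \leq p^{h\lfloor i/h\rfloor}$. Applied with $\alpha = u_j^{(k)}$, for any $\beta = (\beta_{tl}) \in \N_0^{|u_j^{(k)}|}$ with $i + |\beta| \leq m_k - 1$, it gives
\begin{equation*}
\deg\left(a_{ij}^{(k)} \prod_{l=1}^{n}\prod_{t=1}^{\alpha_l} g_{\beta_{tl}l}(X)\right) \leq |u_j^{(k)}|\, p^{h\lfloor|\beta|/h\rfloor} \leq d_k\, p^{h\lfloor(i+|\beta|)/h\rfloor},
\end{equation*}
where the last inequality uses $|\beta| \leq i + |\beta|$ together with monotonicity of $\lfloor \cdot /h\rfloor$. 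This is exactly the condition \eqref{degreesum} required by Theorem \ref{thm4} with our chosen $d_k$, so Theorem \ref{thm4} applies and yields the asserted $q$-divisibility bound.

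I do not anticipate a genuine obstacle here: the heavy analytic work---translating the congruence problem in $\mathcal B$ into a system of equations over $\F_q$ via Lemma \ref{keylem}, bounding the degrees of the resulting polynomials via Theorem \ref{smm} and the box inequality \eqref{77aa}, and finally invoking the Ax--Katz theorem---is already packaged inside Theorem \ref{thm4}. The only point requiring a brief structural check is the monomial-support comparison between $f_{k,i}$ and $f_k$ in the second paragraph, which is a structural feature of the Teichm\"uller expansion rather than a computation.
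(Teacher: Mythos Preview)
Your proposal is correct and follows exactly the approach the paper intends: the paper states Corollary \ref{thm44} without proof as ``a weaker consequence'' of Theorem \ref{thm4}, and its proof of the single-polynomial analog Corollary \ref{thm33} is precisely to set $d=\deg(f)$ and invoke \eqref{77aa} to verify \eqref{degreesum}, which is what you do componentwise with $d_k=\deg(f_k)$. Your added remarks---that the Teichm\"uller expansion preserves the monomial support so $|u_j^{(k)}|\leq d_k$, and that $|\beta|\leq i+|\beta|$ yields the final inequality---make explicit what the paper leaves implicit.
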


In the case $q=p$,  the above corollary reduces to

\begin{corollary}[Weak Version]\label{cor8}
Let $p$ be a prime number. Let $\mathcal B\subseteq\Z_p^n$ with $|\mathcal B|=p^n$ and $\mathcal B \mod p=\F_p^n$, and $\mathcal B=T_p^n(g_{ij}: j\in [1,n],i\in\N)$. Let $f_1,\dots, f_s\in \mathbb{Z}_p[x_1,\ldots,x_n]$ be a system of nonzero polynomials. For given $m_1,\dots,m_s\in \N$, let
\begin{equation*}
  V:=\{X\in \mathcal B \mid  f_k(X)\equiv 0 \pmod {p^{m_k}} \mbox{ for all } k\in [1,s]\}.
\end{equation*}
If $\deg(g_{ij})\leq p^i$ for all $j\in [1,n],i\in [1, m-1]$, then
\begin{equation}\label{nov1}
\mathrm{ord}_p(|V|)\geq \left\lceil\frac{n-\sum_{k=1}^s\frac{p^{m_k}-1}{p-1}\deg (f_k)}{\max\nolimits_{k \in [1,s]}\{p^{m_k-1}\deg (f_k)\}}\right\rceil^*.
\end{equation}
In particular, \eqref{nov1} holds true for all boxes $\mathcal B$ in split form.
\end{corollary}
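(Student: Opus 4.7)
The plan is to deduce this corollary directly from its $\Z_q$-analogue, Corollary \ref{thm44}, by specializing to the case $q = p$, i.e., $h = 1$. When $h = 1$, the exponent $h\lfloor i/h\rfloor$ collapses to $i$, so the degree hypothesis of Corollary \ref{thm44}, namely $\deg(g_{ij}) \leq p^{h\lfloor i/h\rfloor}$, becomes exactly $\deg(g_{ij}) \leq p^i$, which is the hypothesis assumed here. Moreover, the conclusion of Corollary \ref{thm44} is literally the inequality \eqref{nov1} we wish to establish. So the first portion of the statement reduces to quoting Corollary \ref{thm44} verbatim in the case $h=1$.

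For the final ``in particular'' clause concerning boxes in split form, the plan is to verify that the degree hypothesis $\deg(g_{ij}) \leq p^i$ is automatically fulfilled. By Lemma \ref{lemkkgg}(iii), when $\mathcal B = \mathcal I_1 \times \cdots \times \mathcal I_n$ is split, each representing polynomial $g_{ij}(x_j)$ depends only on the single variable $x_j$ and is reduced in that variable, hence $\deg(g_{ij}) \leq p - 1$. Since $p - 1 < p \leq p^i$ for every $i \geq 1$, the hypothesis $\deg(g_{ij}) \leq p^i$ holds trivially for every $j \in [1,n]$ and every $i \in [1,m-1]$. Therefore \eqref{nov1} applies to every split box without any further restriction, which in particular recovers Theorem \ref{grythm}.

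There is essentially no genuine obstacle here, since the entire statement is a specialization of the strong theorems proved in Section \ref{sect4}. The only delicate point is simply noticing the identity $h\lfloor i/h\rfloor = i$ when $h=1$, ensuring that the degree condition in Corollary \ref{thm44} collapses to the simpler form stated in this corollary; this is where the case $q=p$ becomes notably cleaner than the general $q = p^h$ case. The proof, therefore, reduces to a one-line invocation of Corollary \ref{thm44} followed by the short observation (based on Lemma \ref{lemkkgg}(iii)) that split boxes satisfy the degree bound automatically.
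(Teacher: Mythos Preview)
Your proposal is correct and matches the paper's approach exactly: the paper introduces Corollary \ref{cor8} with the phrase ``In the case $q=p$, the above corollary reduces to,'' i.e., it is obtained from Corollary \ref{thm44} by setting $h=1$ (using $h\lfloor i/h\rfloor=i$), and the split-box clause follows from Lemma \ref{lemkkgg}(iii) giving $\deg(g_{ij})\leq p-1<p\leq p^i$, precisely as you argue.
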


The corollary above shows that Theorem \ref{grythm} extends to any box $\mathcal B$ in split form when $q=p$. However, as illustrated in the examples below, Theorem \ref{grythm} cannot be extended to arbitrary $\mathcal{B}$ in general, even when $q=p$ (cf. Example \ref{exam1}) or $\mathcal B$ in split form if $q$ is a higher power of $p$, (cf. the last two rows in Table \ref{tabnb} in Example \ref{exam2}).

\begin{example}\label{exam1}
Let $p=2$. Let $f=x_1+x_2+x_3+x_4\in \Z_2[x_1,x_2,x_3,x_4]$.  Let $V=\{X\in T_2^4 \mid f(X)=0\}$. By Theorem \ref{thm1}, we have $\ord_p(|V|)\geq 1$ (in fact $\ord_p(|V|)=3$).  Given an $a\in T_2$, let $\mathcal B_a=\{(a_1+(a_1a_2a_3a_4+a)p,a_2,a_3,a_4)\mid a_i\in T_p, i\in [1,4]\}$ and $V_a=\{X\in \mathcal B_a \mid f(X)=0 \pmod {p^2}\}$.
Thus, we have $g_1=x_1x_2x_3x_4+a$, and $g_i=0$ for $i\in [2,4]$. Now we consider the cardinality of $V_a$, that is, the number of solutions of the congruence
\begin{equation*}
  (x_1+x_2+x_3+x_4)+(x_1x_2x_3x_4+a)p\equiv 0 \pmod {p^2}
\end{equation*}
with $x_i\in T_p$. By Lemma \ref{keylem} and Remark \ref{snmn}, it is equivalent to counting the number of solutions in $\F_2$ of the system
 \begin{equation*}
\left\{
  \begin{array}{ll}
    y_1+y_2+y_3+y_4=0, \\
    y_1y_2y_3y_4+\widetilde{a}-\sum\frac{1}{2}{2 \choose {t_1,\dots,t_{4}}}y_1^{t_1}\cdots y_{4}^{t_{4}}=0,
  \end{array}
\right.
\end{equation*}
where $y_i=\widetilde{x}_i$ for $i\in [1,4]$ and the sum in the second equation is over all the tuples $(t_1,\dots,t_{4})$ satisfying that $t_1+\dots+t_{4}=2$ and $0\leq t_i<2$ for all $i$. By easy calculation, we get $|V_0|=1$ and $|V_1|=7$ respectively, to both of which Theorem \ref{thm1} cannot be applied as $\deg(g_1)=4$ is larger than $p=2$.
\end{example}

\begin{example}\label{exam2}
Let $p=3$ and $q=p^2=9$. Let $f=x_1+\cdots+x_5\in \Z_q[x_1,\ldots,x_5]$. Given a vector $u=(d_1,\dots,d_5)\in \N_0^5$, let the box $\mathcal B_{(d_1,\dots,d_5)}$ be the set $\{(a_1+a_1^{d_1}p,\dots,a_5+a_5^{d_5}p)\mid a_i\in T_q, i\in [1,5]\}$, or in concise notation, $\mathcal B_u=\{X+X^up\mid X\in T_q\}$.
 Define $V_u:=\{X\in \mathcal B_u \mid f(X)\equiv 0 \pmod {p^2}\}$. By Theorem \ref{thm1}, we have $\ord_p(|V_\mathbf{0}|)\geq 2$ (in fact $\ord_p(|V_\mathbf{0}|)=8$). Now we consider the cardinality of $V_u$ for $u=(d_1,\dots,d_5)\neq \mathbf{0}$, that is, the number of solutions of the congruence
\begin{equation*}
  (x_1+\cdots+x_5)+(x_1^{d_1}+\cdots+x_5^{d_5})p\equiv 0 \pmod {p^2}
\end{equation*}
with $x_i\in T_q$. By Lemma \ref{keylem} and Remark \ref{snmn}, it is equivalent to counting the number of solutions in $\F_q$ of the system
 \begin{equation*}
\left\{
  \begin{array}{ll}
    y_1+\cdots+y_{5}=0, \\
    y_1^{3d_1}+\dots+y_{5}^{3d_{5}}-\sum\frac{1}{3}{3 \choose {t_1,\dots,t_{5}}}y_1^{t_1}\cdots y_{5}^{t_{5}}=0,
  \end{array}
\right.
\end{equation*}
where $y_i=\widetilde{x}_i$ for $i\in [1,5]$ and the sum in the second equation is over all the tuples $(t_1,\dots,t_{5})$ satisfying that $t_1+\dots+t_{5}=3$ and $0\leq t_i<3$ for all $i$. Randomly choosing some vectors $u$ in $\N_0^5$ in which some components are greater than $p$ (so Theorem \ref{thm1} is not valid for them), and computing via computer, we get the results listed in Table \ref{tabnb}.

\begin{table}[h]
\begin{center}
\caption{}
\label{tabnb}
\begin{tabular}{|c|c|c|c|}
  \hline
  % after \\: \hline or \cline{col1-col2} \cline{col3-col4} ...
  $u$ & $3u \mod q-1$  &$|V_u|$ & $\ord_p(|V_u|)$ \\
%  \hline
%  $(1,1,1,1,1)$ &$(3,3,3,3,3)$  & $1161$ & $3$ \\
%  \hline
%  $(2,2,2,2,2)$ &$(6,6,6,6,6)$  & $864$ & $3$ \\
  \hline
  $(4,4,4,4,4)$ &$(4,4,4,4,4)$  & $1206$ & $2$ \\
  \hline
  $(5,5,5,5,5)$ &$(7,7,7,7,7)$  & $2601$ & $2$ \\
  \hline
  $(6,6,6,6,6)$ &$(2,2,2,2,2)$  & $864$ & $3$ \\
  \hline
  $(7,7,7,7,7)$ &$(5,5,5,5,5)$  & $1881$ & $2$ \\
  \hline
  $(8,8,8,8,8)$ &$(8,8,8,8,8)$  & $606$  & $1$ \\
  \hline
  $(4,7,2,5,8)$ &$(4,5,6,7,8)$  & $660$  & $1$ \\
  \hline
\end{tabular}
\end{center}
\end{table}
\end{example}

The last two rows in Table \ref{tabnb} show that Theorem \ref{thm4} is false without the degree bound condition on $g_{ij}$, even for split boxes.
%The first four rows in Table \ref{tabnb} also show that Theorem \ref{thm4} provides a sufficient but not necessary condition for %Theorem \ref{thm2} to be valid for the other non-Teichm\"{u}ller boxes $\mathcal B$. Thus a challenging question arises: what is %the necessary condition for Theorem \ref{thm2} to be valid for the other non-Teichm\"{u}ller boxes $\mathcal B$?

\section*{Acknowledgements}
The authors thank Weihua Li for providing the data in Example \ref{exam2} by computer program. The first author is jointly supported by the National Natural Science Foundation of China (Grant No. 11871291), Natural Science Foundation of Fujian Province, China (No. 2022J02046).

\end{document}